\documentclass[oneside, 11pt]{article}

\setlength{\textwidth}{13.33cm}

\setlength{\hoffset}{-.3cm}

\usepackage[latin1]{inputenc}
\usepackage[english]{babel}
\usepackage{amsthm}
\usepackage{amsmath}
\usepackage{amsxtra}
\usepackage{mathrsfs}
\usepackage{wasysym}
\usepackage{amssymb}
\usepackage[dvips]{graphicx}
\usepackage{graphicx}

\input xy
\xyoption{all}

\newtheorem{teo}{Theorem}[section]
\newtheorem{lemma}[teo]{Lemma}

\newtheorem{defi}[teo]{Definition}
\newtheorem{coro}[teo]{Corollary}

\newtheorem{prop}[teo]{Proposition}

\theoremstyle{remark}
\newtheorem{remark}[teo]{Remark}

\begin{document}

\begin{center}
{\LARGE{\textbf{Reflexive insensitive modal logics} }}\\
\vspace{0.1 cm}
{\large{David Gilbert}}\\ 
\vspace{0.1 cm}
CAPES post-doctoral fellow \\ 
Centro de 
L\'{o}gica, Epistemologia e Hist\'{o}ria de la Ci\^{e}ncia (CLE), \\ Univ. Est. de Campinas\\
{\large{Giorgio Venturi}\\
\vspace{0.1 cm}
FAPESP post-doctoral fellow \\ 
Centro de L\'{o}gica, Epistemologia e Hist\'{o}ria de la Ci\^{e}ncia (CLE), \\ Univ. Est. de Campinas\\}
\end{center}
\vspace{1 cm}

\begin{abstract}
We analyze a class of modal logics rendered insensitive to reflexivity by way of a modification to the semantic definition of the modal operator. We explore the extent to which these logics can be characterized, and prove a general completeness theorem on the basis of a translation between normal modal logics and their reflexive-insensitive counterparts. Lastly, we provide a sufficient semantic condition describing when a similarly general soundness result is also available.
\end{abstract}

\section{Introduction}\label{sec: Intro}

This paper deals with modal logics that are rendered insensitive to the presence
of reflexivity in the accessibility relation by way of a suitable modification of the standard semantics.
Logics of this kind have already been introduced, independently, by \cite{Marcos} and  \cite{SteinsvoldTop}, 
with the intention of providing formal analyses of certain metaphysical and epistemological notions, respectively.\footnote{In \cite{Marcos}, the focus is on logics of essence and accident. In \cite{SteinsvoldTop}, the analysis is aimed at elucidating the logic of unknown truths.} 

Our intension here is not to provide a critique or endorsement of these interpretations, but rather to give a formal study to these logics (which we will call \emph{reflexive-insensitive}), from the perspective of modal logic.  \cite{Marcos} provided a sound and complete axiomatization of the minimal reflexive-insensitive logic. This result was extended by \cite{Steinsvold}, accounting for the reflexive-insensitive analogs of $\mathbf{T}$, $\mathbf{S4}$ and $\mathbf{S4.3}$. However, both papers lacked a comprehensive treatment of the new semantics, as well as the corresponding modal operator, symbolized by $\circ$.

In this paper we propose a general account of the relationship between normal modal logics and reflexive-insensitive modal logics. We will provide a method to associate a normal modal logic $\mathbf{L}$ with its reflexive-insensitive counterpart, which we will call $\mathbf{L}^{\circ}$.

Our contribution to this subject, therefore, consists in both a conceptual clarification of the notions involved, and in proving general results that describe the conditions under which characterization results for a logic $\mathbf{L}^{\circ}$ follow from the corresponding results for $\mathbf{L}$. In particular, we will prove a general completeness theorem for any logic $\mathbf{L}^{\circ}$, provided that the corresponding normal logic $\mathbf{L}$ is canonical and complete with respect to a class of frames $\mathbb{C}_{\mathbf{L}}$ containing the canonical frame.

Moreover, although a fully general soundness result is not as forthcoming, we will demonstrate that there is a semantic condition, what we will call robustness under reflexivity, that is able to act as a sufficient condition for the logic  $\mathbf{L}^{\circ}$ to be sound with respect to the class of all $\mathbf{L}$-frames.

The paper is organized as follows. Section \ref{sec: Lang&Sem} introduces the $\circ$-operator and the 
corresponding semantics, outlining the phenomenon of insensitivity to reflexivity that can be formally described by way of \emph{mirror reduction} (following the terminology of \cite{Marcos}). We then explain how this property is in fact responsible for almost all of the results contained in \cite{Marcos} and \cite{Steinsvold}. In Section \ref{sec: Minimal} we briefly present the minimal reflexive-insensitive modal logic, following the presentation of \cite{Steinsvold}. In Section \ref{sec: Completeness} the $\circ$-translation is defined, and a general completeness theorem for $\circ$-translations of normal modal logics is proved
using a clever model-theoretic technique from \cite{GoldblattMares}. In Section \ref{sec:Soundness} we define the semantic notion 
of robustness under reflexivity, and we provide soundness results encompassing the $\circ$-translations of many well-known normal modal logics. In Section \ref{sec: Axiomatics}, we will address explicitly the project of axiomatizing logics in the language of $\circ$. Finally, in Section \ref{sec: Conclusion}, we will propose some concluding considerations summarizing the results of this paper in a more abstract setting. We also set the stage for some future work.

\section{Language and Semantics}\label{sec: Lang&Sem}

In this paper we will be working with two languages: the usual language of modal logic, which we will call $\mathcal{L}^\Box$, and the language of the reflexive-insensitive logics, which we call $\mathcal{L}^\circ$. Letting $Var$ be a countable set of propositional variables (we can assume the same set of propositional variables for both languages), the formulas of $\mathcal{L}^\Box$, $Form_{\mathcal{L}^\Box}$,  are defined as usual (for $p \in Var$):

\begin{center}
$\varphi ::= p \mid \neg \varphi \mid \varphi \land \varphi \mid \Box \varphi$
\end{center}

\noindent
and the well-formed formulas of $\mathcal{L}^\circ$, $Form_{\mathcal{L}^\circ}$, are defined recursively as:

\begin{center}
$\varphi ::= p \mid \neg \varphi \mid \varphi \land \varphi \mid \circ \varphi$
\end{center}

\noindent
One can define $\top$, $\bot$, $\lor$, $\to$, $\leftrightarrow$, and $\Diamond$ (in $\mathcal{L}^\Box$) as usual. In the language $\mathcal{L}^\circ$, we also define the operator $\bullet$  so that $\bullet \varphi$ stands for $\neg \circ \varphi$.

\subsection{Semantics}

Structurally, the relational semantics we will use for the logics in both languages are the same, but the clauses in the definition of truth will differ.

\begin{defi}[Frame and Model]
A frame $F$ is an ordered pair $\langle W, R \rangle$, where $W$ is a non-empty set of states and $R \subseteq W \times W$. A model $M = \langle F, V \rangle$ is a frame along with a valuation function $V: Var \to \mathcal{P}(W)$. 
\end{defi}

Intuitively, $V$ assigns, to each variable $p$, the set of states at which $p$ will be considered true. The truth of a formula of $\mathcal{L}^\Box$, with respect to a model and state, is defined as normal. 

\smallskip
\begin{tabular}{lll}
$M, w \models p$ & iff \text{ }& $w \in V(p)$ \\
$M, w \models \neg \varphi$ & iff & $M, w \not\models \varphi$\\
$M, w \models \varphi \land \psi$ \text{ }& iff & $M, w \models \varphi$ and $M, w \models \psi$\\
$M, w \models \Box \varphi$ & iff & $M, x \models \varphi$ for all $x$ s.t.\ $wRx$
\end{tabular}

\smallskip
For the $\mathcal{L}^\circ$ formulas, the propositional formulas are treated identically, and the interpretation of $\circ$ is given by:

\smallskip
\begin{tabular}{lll}
$M, w \models \circ \varphi$ & \text{ } iff \text{ }& either $M, w \not\models \varphi$ or, for all $x \in W$, if $wRx$ then $M, x \models  \varphi$ 
\end{tabular}

Thus, for $\bullet$, we have:

\smallskip
\begin{tabular}{lll}
$M, w \models \bullet \varphi$ & \text{ } iff \text{ }& $M, w \models \varphi$ and there exists an $x \in W$ s.t.\ $wRx$ and $M, x \not\models  \varphi$ 
\end{tabular}

\smallskip
A formula is said to be true in a model $M$ when it is true at every state in $M$. A formula is said to be valid with respect to a frame $F$ when it is true in every model based on $F$, and a formula is valid with respect to a class of frames when it is valid on each frame in the class.

One can view these clauses as providing a unified definition of the truth of a formula, regardless of the language used. That is, when evaluating $\mathcal{L}^\circ$-formulas, one will utilize the semantic condition for $\circ$, but not the one for $\Box$, though one could, if one wished, consider that clause present.  However, despite this, we think it is convenient to use the following notational convention: $M, x \models_{ri} \alpha$ indicates that we are evaluating $\alpha$ in the context of the clauses appropriate for $\mathcal{L}^\circ$, or, equivalently, that $\alpha$ is a formula in the language $\mathcal{L}^\circ$. However, when clear from the context, we will not stress the semantic context. 

It is also worth pointing out that while $\circ \varphi$ can be defined within the context of normal modal logic as $\varphi \to \Box \varphi$, it is not the case that $\Box \varphi$ can always be defined within $\mathcal{L}^\circ$ \cite{Marcos}. In extensions of $\mathbf{T}$, however, it can be regarded as an abbreviation of $\varphi \land \circ \varphi$.\footnote{Below, we will exploit this understanding of $\Box \varphi$ when defining a translation between the languages $\mathcal{L}^\Box$ and $\mathcal{L}^\circ$. And while it is not always a genuine definition, it can be assumed, harmlessly, in a wide variety of cases, which will be detailed.}

\subsection{Mirror Reduction}

Consider the following definition, from \cite[p.~50]{Marcos}.

\begin{defi}[Mirror Reduction]
Let $F = \langle W, R \rangle$ and $F^m = \langle W, R^m \rangle$ be frames such that $R^m \subseteq R$ and $R \setminus R^m \subseteq \{ \langle x, x \rangle : x \in W \}$. Then $F^m$ is said to be a mirror-reduction of $F$. Two frames are said to be mirror-related, $F_1 \sim_m F_2$, when they are  mirror reductions of a common frame. 
\end{defi}

More casually stated, $F^m$ is just the result of removing some reflexive arrows from $F$. Though this is a straightforward concept, its utilization sheds some immediate light on the behaviour of formulas (and, hence, logics) in the language $\mathcal{L}^\circ$ with respect to the semantics outlined above. Immediately, for example, one can obtain the following lemma.

\begin{lemma}[\cite{Marcos}, Lemma 4.2]
Let $F^m$ be a mirror reduction of $F$. Then for any models $M$ and $M^m$, based on $F$ and $F^m$, respectively, and any $x \in W$, 

\begin{center}
$M, x \models_{ri} \alpha$ iff $M^m,x \models_{ri} \alpha$
\end{center}

\noindent
for all $\mathcal{L}^\circ$-formulas $\alpha$.

\end{lemma}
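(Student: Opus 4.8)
The plan is to prove the equivalence by induction on the structure of the $\mathcal{L}^\circ$-formula $\alpha$. Since $M$ and $M^m$ are based on frames sharing the same underlying set $W$, I would first require that the two models carry \emph{the same valuation}, $V = V^m$; this is the natural reading of the statement (otherwise the claim is false already for atoms), and it ensures the base case $\alpha = p$ holds immediately, since $M, x \models_{ri} p$ iff $x \in V(p)$ iff $M^m, x \models_{ri} p$. The Boolean cases $\alpha = \neg\beta$ and $\alpha = \beta \land \gamma$ are routine: they follow directly from the induction hypothesis applied to the immediate subformulas, because the truth clauses for $\neg$ and $\land$ are defined identically in both semantic contexts and do not mention the accessibility relation at all.

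The only case of genuine interest is the modal one, $\alpha = \circ\beta$, and this is where the mirror-reduction hypothesis does its work. Unwinding the definition, $M^m, x \models_{ri} \circ\beta$ holds iff either $M^m, x \not\models_{ri} \beta$, or for every $y$ with $x R^m y$ we have $M^m, y \models_{ri} \beta$. By the induction hypothesis, $M$ and $M^m$ agree on $\beta$ at every state, so I may freely replace each occurrence of $M^m, \cdot \models_{ri} \beta$ by $M, \cdot \models_{ri} \beta$. After this replacement the two conditions to compare are: (i) for all $y$ with $xRy$, $M, y \models_{ri} \beta$; versus (ii) for all $y$ with $x R^m y$, $M, y \models_{ri} \beta$. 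Since $R^m \subseteq R$, condition (i) trivially entails (ii). For the converse direction I must show that assuming (ii) together with $M, x \models_{ri} \beta$ yields (i).

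The crux — and the main obstacle, though a mild one — is handling the single edge that $R$ may contain but $R^m$ lacks. By the definition of mirror reduction, $R \setminus R^m \subseteq \{\langle z, z\rangle : z \in W\}$, so any successor $y$ of $x$ under $R$ that is \emph{not} a successor under $R^m$ must satisfy $y = x$. Thus the only way condition (i) can fail while (ii) holds is if $x R x$ but not $x R^m x$, and the offending state is $x$ itself. But in the non-trivial branch of the $\circ$-clause we are assuming $M, x \models_{ri} \beta$, which is exactly the truth of $\beta$ at that missing reflexive successor. Hence condition (i) holds after all. Organizing the argument carefully, one sees that in \emph{both} the $M$ and $M^m$ readings the potentially problematic successor is the reflexive one at $x$, and the disjunct ``$M, x \not\models_{ri}\beta$'' in the $\circ$-clause is precisely designed to neutralize it: whenever $\beta$ fails at $x$ the first disjunct makes $\circ\beta$ true regardless of the reflexive edge, and whenever $\beta$ holds at $x$ the reflexive successor contributes a true instance and so cannot distinguish the two relations. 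This symmetry yields the equivalence $M, x \models_{ri} \circ\beta$ iff $M^m, x \models_{ri} \circ\beta$, completing the induction and hence the lemma.
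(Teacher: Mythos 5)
Your proof is correct and follows essentially the same route as the paper's: induction on formula complexity, with the $\circ$-case resolved by observing that any $R$-successor missing from $R^m$ must be $x$ itself, so it is neutralized either by the first disjunct of the $\circ$-clause or by the assumed truth of $\beta$ at $x$. Your explicit stipulation that $V = V^m$ (left implicit in the paper) and your streamlining move of first replacing all $M^m$-truth of $\beta$ by $M$-truth via the induction hypothesis are minor presentational refinements, not a different argument.
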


\begin{proof}
This is proved by way of a straightforward induction on the complexity of formulas. We include only the case for $\circ$. 

Assuming $M, x \models_{ri} \circ \varphi$, we have that either $M, x \not\models_{ri} \varphi$ or, for all $y \in W$, if $xRy$ then $M, y \models_{ri}  \varphi$. In the first case, from the induction hypothesis, we have that $M^m, w \not\models_{ri} \varphi$, and so $M^m, x \models_{ri} \circ \varphi$. 

So assume that for all $y \in W$, if $xRy$ then $M, y \models_{ri}  \varphi$. Then, since $R^m \subseteq R$, we have if $xR^my$ then $M, y \models_{ri}  \varphi$, and from the induction hypothesis we obtain  $xR^my$ implies $M^m, y \models_{ri}  \varphi$, as desired.

In the other direction, consider $M^m, x \models_{ri} \circ \varphi$. Again, if $M^m, x \not\models_{ri} \varphi$ then we are finished. So assume that $M^m, x \models_{ri} \varphi$ and that for all $y \in W$, if $xR^my$ then $M^m, y \models_{ri}  \varphi$. From the induction hypothesis we have that if $xR^my$ then $M, y \models_{ri}  \varphi$. But now, since we have that $M^m, x \models_{ri} \varphi$, we also have $M, x \models_{ri} \varphi$, and so if $xRy$ then $M, y \models_{ri}  \varphi$, as desired.

\end{proof}

Perhaps the most interesting, and applicable, aspect of this theorem is the following corollary:

\begin{coro}[\cite{Marcos}, Lemma 4.2]
If $F_1 \sim_m F_2$ then, for all $\mathcal{L}^\circ$-formulas $\alpha$, 

\begin{center}
$F_1 \models_{ri} \alpha$ iff $F_2 \models_{ri} \alpha$.
\end{center}
\end{coro}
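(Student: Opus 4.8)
The plan is to deduce this corollary from the preceding lemma by unwinding the definitions of frame validity and of the mirror-relatedness of $F_1$ and $F_2$. Recall that $F \models_{ri} \alpha$ means that $\alpha$ is true at every state of every model based on $F$, so the statement to be proved is an equivalence between two universally quantified claims about models. The key structural fact I would exploit is that a valuation $V$ is just a function $Var \to \mathcal{P}(W)$, and since two mirror-related frames share the same underlying set $W$ of states, a valuation for one is automatically a valuation for the other. Thus each model $M = \langle F_1, V\rangle$ based on $F_1$ has a canonical counterpart $\langle F_2, V\rangle$ based on $F_2$ using the very same $V$, and vice versa.

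First I would treat the case where $F_1$ and $F_2$ stand in the direct mirror-reduction relation, say $F_2$ is a mirror reduction of $F_1$; the general case follows by symmetry and transitivity through the common frame. To prove the forward direction, I would assume $F_1 \models_{ri} \alpha$ and take an arbitrary model $M_2 = \langle F_2, V\rangle$ and state $x$. Forming the companion model $M_1 = \langle F_1, V\rangle$ over the same valuation, the hypothesis gives $M_1, x \models_{ri} \alpha$, and the Lemma (applied with $M = M_1$, $M^m = M_2$, which are legitimate since $F_2$ is a mirror reduction of $F_1$) transfers this to $M_2, x \models_{ri} \alpha$. As $x$ and $V$ were arbitrary, $F_2 \models_{ri} \alpha$. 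The reverse direction is entirely symmetric, using the other implication of the Lemma's biconditional.

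For the fully general statement, $F_1 \sim_m F_2$ means by definition that both are mirror reductions of some common frame $F$. Here I would simply chain the special case twice: $F \models_{ri} \alpha$ iff $F_1 \models_{ri} \alpha$, and $F \models_{ri} \alpha$ iff $F_2 \models_{ri} \alpha$, whence $F_1 \models_{ri} \alpha$ iff $F_2 \models_{ri} \alpha$. I do not anticipate a serious obstacle, since all the inductive work has already been discharged in the Lemma; the only point requiring a little care is the bookkeeping of valuations — namely observing that the identical underlying state set $W$ lets one reuse a single valuation across the two frames so that the Lemma, which is stated for a fixed pair of models over mirror-related frames, applies verbatim. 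Getting the quantifier order right (fixing an arbitrary model on one frame and producing its companion on the other) is what makes the passage from the Lemma's model-level equivalence to the corollary's frame-level equivalence rigorous.
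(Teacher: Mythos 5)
Your proposal is correct and matches the paper's intent exactly: the paper states this result as an immediate corollary of the preceding lemma and leaves the unwinding implicit, which is precisely what you carry out (reusing a single valuation across the shared state set, transferring truth model-by-model, and chaining through the common frame witnessing $F_1 \sim_m F_2$). Your explicit attention to the quantifier bookkeeping and to the fact that the lemma tacitly assumes the two models share one valuation is a faithful, careful rendering of the argument the paper takes for granted.
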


In \cite{Marcos}, a sound and complete axiomatization was presented for the simplest logic that is insensitive to reflexivity. That is, an axiom system in the language of $\mathcal{L}^\circ$ was demonstrated to be sound and complete with respect to the class of all frames. Extensions of this logic, however, were not fully explored. For example, in \cite{Marcos} the following open problem was posed: Provide a natural axiomatization for the set of  $\mathcal{L}^\circ$-formulas that are valid on the class of reflexive frames \cite[p.~48]{Marcos}. \cite{Steinsvold} solved this open problem by proving:

\begin{teo}[\cite{Steinsvold}, Proposition 3.5]\label{compT}
Let $\mathbf{KX}$ be any normal modal logic between $\mathbf{K}$ and $\mathbf{KT}$: $\mathbf{K} \subseteq \mathbf{KX} \subseteq \mathbf{KT}$. Then the following are equivalent, for $\alpha$ a formula of $\mathcal{L}^\circ$:

\begin{enumerate}
\item $\alpha$ is valid over the class of all frames;
\item $\alpha$ is valid over the class of all frames for $\mathbf{KX}$;
\item $\alpha$ is valid over the class of all reflexive frames.
\end{enumerate}

\end{teo}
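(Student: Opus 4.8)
The plan is to establish the cycle $(1) \Rightarrow (2) \Rightarrow (3) \Rightarrow (1)$, where the first two links are immediate from inclusions between the relevant frame classes, and the third link carries the entire content of the theorem via the mirror-reduction machinery already developed.

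First I would record the inclusions among the three classes. Every frame validates $\mathbf{K}$, so the frames for $\mathbf{K}$ are all frames; and every reflexive frame validates $\mathbf{KT}$ by soundness. Hence, from $\mathbf{K} \subseteq \mathbf{KX} \subseteq \mathbf{KT}$, a reflexive frame validates every theorem of $\mathbf{KT}$ and therefore every theorem of $\mathbf{KX}$, while the frames for $\mathbf{KX}$ form a subclass of all frames. This gives
\[
\{\text{reflexive frames}\} \;\subseteq\; \{\text{frames for } \mathbf{KX}\} \;\subseteq\; \{\text{all frames}\}.
\]
Since validity over a larger class entails validity over any subclass, $(1) \Rightarrow (2)$ and $(2) \Rightarrow (3)$ are then immediate: validity of $\alpha$ over all frames yields validity over the $\mathbf{KX}$-frames, and validity over the $\mathbf{KX}$-frames yields validity over the reflexive frames.

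The substance lies in $(3) \Rightarrow (1)$. Assuming $\alpha$ is valid over all reflexive frames, I would take an arbitrary frame $F = \langle W, R \rangle$ and pass to its reflexive closure $F^{+} = \langle W, R \cup \{\langle x,x\rangle : x \in W\}\rangle$. By construction $F^{+}$ is reflexive, so $F^{+} \models_{ri} \alpha$ by hypothesis. The crucial observation is that $F$ is a mirror reduction of $F^{+}$: indeed $R \subseteq R \cup \{\langle x,x\rangle\}$, and the difference $\bigl(R \cup \{\langle x,x\rangle\}\bigr) \setminus R$ consists solely of reflexive pairs, precisely as the definition of mirror reduction demands. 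Consequently $F \sim_m F^{+}$, and the Corollary to Lemma 4.2 yields $F \models_{ri} \alpha$ iff $F^{+} \models_{ri} \alpha$. As the right-hand side holds, so does $F \models_{ri} \alpha$; and since $F$ was arbitrary, $\alpha$ is valid over all frames, which is $(1)$.

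The only delicate step—and the heart of the argument—is $(3) \Rightarrow (1)$, where one must recognize that validity over reflexive frames already determines validity over all frames. The point is that the $\mathcal{L}^\circ$ semantics cannot distinguish $F$ from its reflexive closure $F^{+}$, because the only arrows by which they differ are reflexive ones. This \emph{insensitivity to reflexivity}, captured formally by the mirror-reduction Corollary, is exactly what collapses the three ostensibly distinct notions of validity into a single one.
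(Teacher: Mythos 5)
Your proposal is correct and follows essentially the same route as the paper: the implications $(1)\Rightarrow(2)\Rightarrow(3)$ via the frame-class inclusions, and $(3)\Rightarrow(1)$ by observing that any frame is a mirror reduction of its reflexive closure and invoking the mirror-reduction corollary. The only cosmetic difference is that the paper phrases $(3)\Rightarrow(1)$ as a proof by contradiction, whereas you argue it directly.
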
 

\noindent
The proof provided by \cite{Steinsvold}, however, was based on the canonical construction (which differed from the one offered by \cite{Marcos}) of the basic logic. The point we would like to make here is that, in fact, this result (and others similar to it) is a direct corollary of the mirror-reduction results given above, as the following proof demonstrates.

\begin{proof}
Clearly, validity for the class of all frames implies validity for the class of $KX$ frames. Similarly for the move from $KX$ to $KT$ frames. Thus, just assume that a formula $\alpha$ is valid in all reflexive frames. We have to show that it is valid in all frames whatsoever. Assume this not to be the case. That is, assume that there is a frame on which one can invalidate $\alpha$. In this case, such a frame is a mirror-reduction of a fully reflexive frame, and so we would have that the fully reflexive frame also invalidates $\alpha$, and this is obviously a contradiction.
\end{proof}

Therefore, by providing an axiomatization for the basic logic, \cite{Marcos} also provided an axiomatization for the logic of all reflexive frames, and all intermediate logics, thereby answering his own question. 

Obviously, such a result can be generalized to some extent. 

\begin{prop}
Let $\mathbf{L} \subseteq \mathbf{LX} \subseteq \mathbf{LT}$ be normal modal logics and $\mathbb{C}_\mathbf{L}$, $\mathbb{C}_{\mathbf{LX}}$, and $\mathbb{C}_{\mathbf{LT}}$ be the classes of $\mathbf{L}$, $\mathbf{LX}$, and $\mathbf{LT}$ frames, respectively. Then, if it is the case that the addition of all possible reflexive arrows to a frame in $\mathbb{C}_\mathbf{L}$ results in a frame in $\mathbb{C}_{\mathbf{LT}}$, then, for any $\alpha \in \mathcal{L}^\circ$, $\alpha$ is valid in $\mathbb{C}_\mathbf{L}$ iff it is valid in $\mathbb{C}_{\mathbf{LX}}$ iff it is valid in $\mathbb{C}_{\mathbf{LT}}$. 
\end{prop}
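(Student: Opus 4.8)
The plan is to generalize the proof of Theorem \ref{compT} almost verbatim, replacing the appeal to ``fully reflexive frames'' with the hypothesis that adding all reflexive arrows keeps us inside the relevant frame class. The two inclusions are immediate from the containments $\mathbf{L} \subseteq \mathbf{LX} \subseteq \mathbf{LT}$, which give $\mathbb{C}_{\mathbf{LT}} \subseteq \mathbb{C}_{\mathbf{LX}} \subseteq \mathbb{C}_{\mathbf{L}}$; hence validity in the larger class $\mathbb{C}_{\mathbf{L}}$ implies validity in $\mathbb{C}_{\mathbf{LX}}$, which in turn implies validity in $\mathbb{C}_{\mathbf{LT}}$. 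So the only substantive direction is the contrapositive of the claim that validity in $\mathbb{C}_{\mathbf{LT}}$ implies validity in $\mathbb{C}_{\mathbf{L}}$.

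For that direction I would argue by contraposition. Suppose $\alpha \in \mathcal{L}^\circ$ is not valid in $\mathbb{C}_{\mathbf{L}}$, so there is a frame $F = \langle W, R\rangle \in \mathbb{C}_{\mathbf{L}}$, a model $M$ on $F$, and a state $x$ with $M, x \not\models_{ri} \alpha$. Let $F^{+} = \langle W, R^{+}\rangle$ be the frame obtained from $F$ by adding every reflexive arrow, i.e. $R^{+} = R \cup \{\langle w,w\rangle : w \in W\}$. By the hypothesis of the proposition, $F^{+} \in \mathbb{C}_{\mathbf{LT}}$. The key observation is that $F$ is a mirror reduction of $F^{+}$: indeed $R \subseteq R^{+}$ and $R^{+} \setminus R \subseteq \{\langle w,w\rangle : w \in W\}$, exactly matching the definition of mirror reduction. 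Hence by Lemma 2.4 (the mirror-reduction lemma), applied to the model $M$ on $F$ and the corresponding model $M^{+}$ on $F^{+}$ with the same valuation, we obtain $M^{+}, x \not\models_{ri} \alpha$, so $\alpha$ fails on $F^{+} \in \mathbb{C}_{\mathbf{LT}}$, establishing that $\alpha$ is not valid in $\mathbb{C}_{\mathbf{LT}}$.

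I do not anticipate a genuinely hard step here: the entire content is the single structural observation that $F$ is a mirror reduction of its reflexive closure $F^{+}$, after which the mirror-reduction lemma (or its frame-level corollary) does all the work. The only point requiring care is ensuring the hypothesis is invoked correctly, namely that $F^{+}$ really lands in $\mathbb{C}_{\mathbf{LT}}$ and not merely in some class of reflexive $\mathbf{L}$-frames; this is precisely what the proposition assumes, so no extra frame-theoretic reasoning about which first-order conditions are preserved under adding reflexive loops is needed. If one wanted a slicker presentation, one could instead invoke the corollary directly: since $F \sim_m F^{+}$, the two frames validate exactly the same $\mathcal{L}^\circ$-formulas, so $\alpha$ is valid on $F$ iff valid on $F^{+}$, and the chain of equivalences follows once we know each $F \in \mathbb{C}_{\mathbf{L}}$ has its $F^{+}$ in $\mathbb{C}_{\mathbf{LT}}$ and conversely each frame in $\mathbb{C}_{\mathbf{LT}}$ is already its own reflexive closure.
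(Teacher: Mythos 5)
Your proof is correct and takes essentially the same route as the paper: the paper proves this proposition by noting it follows "simply by mirror-reduction," exactly as in its proof of Theorem \ref{compT} — the easy implications come from the class inclusions $\mathbb{C}_{\mathbf{LT}} \subseteq \mathbb{C}_{\mathbf{LX}} \subseteq \mathbb{C}_{\mathbf{L}}$, and the substantive direction is your contrapositive argument that a countermodel on $F \in \mathbb{C}_{\mathbf{L}}$ transfers to its reflexive closure $F^{+} \in \mathbb{C}_{\mathbf{LT}}$ because $F$ is a mirror reduction of $F^{+}$.
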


\noindent
As in the specific case of $\mathbf{K}$ and $\mathbf{T}$, the reason is simply mirror-reduction.

To provide another illustration of this observation, one can consider the logics  $\mathbf{K4} \subseteq \mathbf{K4X} \subseteq \mathbf{K4T} = \mathbf{S4}$. Recall that the class of $\mathbf{K4}$ frames are the transitive frames and  the $\mathbf{S4}$ frames are transitive and reflexive. Then, because the addition of all reflexive arrows preserves transitivity, we immediately obtain the result that if one can axiomatize the reflexive-insensitive logic (in $\mathcal{L}^\circ$) corresponding to the transitive frames, then one also has axiomatized the logic for transitive reflexive frames, and all intermediate logics as well. This result is also contained in \cite{Steinsvold}, where, once again, a canonical model construction is used in the proof once an adequate axiomatization has been provided for $\mathbf{K4}$ (by means of the axiom $(\circ \varphi \land \varphi) \to \circ(\circ \varphi \land \varphi)$). 

On the contrary, if one cannot add reflexive arrows to a frame while preserving the relevant structural properties, then this result clearly does not hold. For example, we can consider the logics situated between $\mathbf{K5}$ and $\mathbf{S5}$.\footnote{$\mathbf{K5}$ is the logic obtained by adding the axiom $\Diamond \varphi \to \Box \Diamond \varphi$ to $\mathbf{K}$.} $\mathbf{K5}$ is characterized by the class of euclidean frames (if $xRy$ and $xRz$ then $yRz$) and $\mathbf{S5}$ by the class of frames in which $R$ is an equivalence relation. However, when one adds reflexive arrows to a euclidean frame one need not obtain an $\mathbf{S5}$ frame, because this might require the extra step of taking the closure (of $R$) under the euclidean condition. Thus, the jobs of axiomatizing these classes of frames, in the reflexive-insensitive setting, are separate.   

The following definition will be useful when proving soundness and completeness for systems, as we will below. It is simply an attempt to formalize the effect of mirror-reduction on soundness and completeness results.

\begin{defi}
Let $\mathbb{C}$ be a class of frames, and let $\mathbf{L}$ be a logic in the language $\mathcal{L}^\circ$. 
Then we say that $\mathbf{L}$ is $m$-characterized by $\mathbb{C}$ if $\mathbf{L}$ is sound and complete with respect 
to $\mathbb{C}^m = \{F : \exists F' \in \mathbb{C}( F \sim_m F')\}$.\footnote{Notice that the notion of $m$-characterization  
is weaker than the standard one, and it is also different from the notion of \textbf{LEA}-characterizability as defined in \cite{Marcos}.} 
\end{defi}

\section{The Minimal Logic}\label{sec: Minimal}

For the remainder of this paper we will be concerned with logics in the language $\mathcal{L}^\circ$ and their relationships with logics in $\mathcal{L}^\Box$. In order to properly define the logics in $\mathcal{L}^\circ$ with which we are concerned, we will make use of the following axiom schemata (found in \cite{Steinsvold}):

\begin{tabular}{ll}
$b0$ \text{ }& $\circ \top$\\
$b1$ & $\bullet \varphi \to \varphi$ \\
$b2$ & $(\circ \varphi \land \circ \psi) \to \circ (\varphi \land \psi)$ \\
\end{tabular}

\noindent
as well as the rule

\begin{tabular}{ll}
$bN$ \text{ }& from $\vdash \varphi \to \psi$ one can obtain $\vdash (\circ \varphi \land \varphi) \to (\circ \psi \land \psi)$. \\
\end{tabular}

\begin{defi}[$RI$-Logics]
An \emph{$RI$-logic} is a set of $\mathcal{L}^\circ$ formulas that contains all substitution instances of propositional tautologies, all instances of the schemata $b0$, $b1$, and $b2$, and is closed under the rules Modus Ponens, $bN$, and Uniform Substitution.
\end{defi}

For now, following \cite{Steinsvold}, we can call the smallest $RI$-Logic $\mathbf{B_K}$.

\begin{prop}\label{BKThms}
The following are theorems of $\mathbf{B_K}$:

\begin{enumerate}
\item $((\circ \varphi \land \varphi) \lor (\circ \psi \land \psi)) \to \circ (\varphi \lor \psi)$
\item $\varphi \lor \circ \varphi$
\item $\varphi \to (\circ(\varphi \to \psi) \to (\circ \varphi \to \circ \psi))$
\end{enumerate}

In addition, the following rules are derivable:

\begin{enumerate}
\item from $\vdash \varphi \leftrightarrow \psi$ one can obtain $\vdash \circ \varphi \leftrightarrow \circ \psi$
\item from $\vdash \varphi$ one can obtain $\vdash \circ \varphi$
\end{enumerate}
\end{prop}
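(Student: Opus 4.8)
The plan is to derive everything from the axioms $b0$, $b1$, $b2$ and the rule $bN$ together with propositional reasoning, exploiting one recurring device: since $bN$ only ever yields implications whose antecedent carries the extra conjunct $\varphi$, I will repeatedly have to split on whether the relevant formula is true, closing the ``false'' branch with the contrapositive of $b1$. Concretely, reading $\bullet\varphi$ as $\neg\circ\varphi$, axiom $b1$ is $\neg\circ\varphi\to\varphi$, whose propositional contrapositive is $\neg\varphi\to\circ\varphi$; this is exactly item (2) (equivalently $\varphi\lor\circ\varphi$), so (2) is immediate, and the same schema $\neg\chi\to\circ\chi$ will serve as the auxiliary lemma throughout.

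For item (1) I would apply $bN$ to the tautologies $\varphi\to(\varphi\lor\psi)$ and $\psi\to(\varphi\lor\psi)$, obtaining $(\circ\varphi\land\varphi)\to\circ(\varphi\lor\psi)$ and $(\circ\psi\land\psi)\to\circ(\varphi\lor\psi)$ after discarding the redundant conjunct on the right, and then combine them by disjunction elimination. For the derived rule (i), from $\vdash\varphi\leftrightarrow\psi$ I take the two halves separately: $bN$ on $\varphi\to\psi$ gives $(\circ\varphi\land\varphi)\to\circ\psi$, while the contrapositive of $\psi\to\varphi$ composed with the auxiliary lemma $\neg\psi\to\circ\psi$ gives $\neg\varphi\to\circ\psi$; a case split on $\varphi$ then yields $\circ\varphi\to\circ\psi$, and symmetry gives the converse. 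Rule (ii) follows at once: if $\vdash\varphi$ then $\vdash\varphi\leftrightarrow\top$, so rule (i) gives $\vdash\circ\varphi\leftrightarrow\circ\top$, and $b0$ discharges $\circ\top$.

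The real work is item (3). Here I would first use $b2$ on $\circ\varphi\land\circ(\varphi\to\psi)$ to get $\circ(\varphi\land(\varphi\to\psi))$, then rewrite the argument by congruence (rule (i), since $\varphi\land(\varphi\to\psi)\leftrightarrow\varphi\land\psi$ is a tautology) to obtain $\circ(\varphi\land\psi)$. It then remains to pass from $\circ(\varphi\land\psi)$ to $\circ\psi$ under the standing assumption $\varphi$. I would split on $\psi$: if $\psi$ holds then, since $\varphi$ is assumed, $\varphi\land\psi$ holds too, so $bN$ applied to $\varphi\land\psi\to\psi$ supplies the needed truth conjunct and delivers $\circ\psi$; if $\psi$ fails, the auxiliary lemma $\neg\psi\to\circ\psi$ gives $\circ\psi$ directly. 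Assembling these under the three assumptions $\varphi$, $\circ(\varphi\to\psi)$, $\circ\varphi$ yields the formula.

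I expect item (3) to be the main obstacle, precisely because it is the one place where all the ingredients must be coordinated at once: the $b2$-combination, the congruence rewrite (which already presupposes rule (i)), and the truth/falsity case split on $\psi$ with its reliance on $b1$. The conceptual point underlying every step is that $\circ$ behaves like a normal box only on formulas that are actually true, which is why $bN$'s mandatory antecedent conjunct and the $b1$-driven case analysis are unavoidable rather than cosmetic.
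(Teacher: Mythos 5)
Your proposal is correct. For the two derived rules---the only part the paper actually proves---your argument is essentially the paper's own: rule (i) is obtained by applying $bN$ to one half of the equivalence to get $(\circ\varphi\land\varphi)\to\circ\psi$, closing the $\neg\varphi$ branch via the contrapositive of $b1$ (i.e.\ item (2), $\neg\psi\to\circ\psi$), and assembling by a case split on $\varphi$; rule (ii) then follows exactly as you say, from $\varphi\leftrightarrow\top$, rule (i), and $b0$. The difference is that the paper leaves the three theorem schemas (1)--(3) unproved (deferring to Steinsvold), whereas you supply derivations for them, and these check out: (2) is indeed just the propositional contrapositive of $b1$; (1) follows from $bN$ applied to the two injection tautologies plus disjunction elimination; and for (3) your sequence---$b2$ to get $\circ(\varphi\land(\varphi\to\psi))$, the congruence rule (i) on the tautology $\varphi\land(\varphi\to\psi)\leftrightarrow\varphi\land\psi$, then $bN$ on $\varphi\land\psi\to\psi$ for the $\psi$-branch and $\neg\psi\to\circ\psi$ for the other---is sound, since every modal rule is applied only to outright theorems and the case analysis is pure propositional reasoning from those theorems. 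Note also that your dependency order (item (2), then rule (i), then item (3)) is non-circular, which matters because rule (i) is itself an ingredient of your proof of (3).
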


\begin{proof}
We will give a proof of the two rules. Though this result was referred to in \cite{Steinsvold}, an explicit derivation was not provided. 

For the first rule, assume that $\vdash \varphi \leftrightarrow \psi$. We will just prove one direction. The other direction is obtained in exactly the same manner.

\begin{tabular}{llr}
1. \text{ } & $\vdash (\circ \varphi \land \varphi) \leftrightarrow (\circ \psi \land \psi)$ & from the rule $bN$\\
2. & $\vdash (\circ \varphi \land \varphi) \to \circ \psi$ & from line 1\\
3. & $\vdash (\circ \varphi \land \neg \varphi) \to \neg \psi$ & since $\vdash \varphi \leftrightarrow \psi$\\
4. & $\vdash \neg \psi \to \circ \psi$ & because $\vdash \psi \lor \circ \psi$\\
5. & $\vdash (\circ \varphi \land \neg \varphi) \to \circ \psi$ & lines 3 and 4\\
6. & $\vdash ((\circ \varphi \land \varphi) \lor (\circ \varphi \land \neg \varphi)) \to \circ \psi$ & lines 2 and 5\\
7. & $\vdash (\circ \varphi \land (\varphi \lor \neg \varphi)) \to \circ \psi$ & line 6\\
8. & $\vdash \circ \varphi \to \circ \psi$ \\
\end{tabular}

The second rule is just a consequence of the first. If $\vdash \varphi$ then $\vdash \varphi \leftrightarrow \top$, and so, from the first rule, $\vdash \circ \varphi \leftrightarrow \circ \top$. Because $\vdash \circ \top$, we have $\vdash \circ \varphi$, as desired.

\end{proof}

\begin{teo}\label{BKSoundness}
$\mathbf{B}_\mathbf{K}$ is sound with respect to the class of all frames.
\end{teo}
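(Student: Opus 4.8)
The plan is to prove soundness by verifying that each axiom schema is valid on the class of all frames, and that each inference rule preserves validity. Since validity on a class of frames reduces to validity at every state of every model based on a frame in the class, it suffices to work state-by-state using the truth clauses for $\mathcal{L}^\circ$ given above. I would handle the three axiom schemata $b0$, $b1$, $b2$ first, since these are direct unwindings of the semantic clause for $\circ$, and then address the rules Modus Ponens, $bN$, and Uniform Substitution.

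First I would dispatch the propositional tautologies and Modus Ponens, which hold because the clauses for $\neg$ and $\land$ are the classical Boolean ones, so any state behaves like a classical valuation on the propositional skeleton. For the axioms: $b0$ ($\circ \top$) holds at every $w$ because $M,w \models \top$ forces the second disjunct of the $\circ$-clause to be checked, and $\top$ is true at every $x$; alternatively one simply notes $\top$ is true everywhere so the universal condition is vacuously satisfied. For $b1$ ($\bullet \varphi \to \varphi$), I would unfold $\bullet \varphi$ using the derived clause: if $M,w \models \bullet \varphi$ then by definition $M,w \models \varphi$, which is exactly the consequent, so the implication holds trivially. For $b2$, assuming $M,w \models \circ \varphi \land \circ \psi$, I would argue by cases on whether $\varphi \land \psi$ is true at $w$: if it is false then the first disjunct of $\circ(\varphi \land \psi)$ is satisfied; if it is true then both $\varphi$ and $\psi$ are true at $w$, so neither $\circ \varphi$ nor $\circ \psi$ can be satisfied via their first disjunct, forcing the universal clause for each, and hence $\varphi \land \psi$ holds at every $R$-successor of $w$.

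The one genuinely nontrivial step is Uniform Substitution. Here I would observe that the semantics is uniform in the sense that the truth value of a formula at a state depends only on the valuation of its propositional variables; replacing a variable $p$ throughout by a formula $\chi$ corresponds to re-interpreting the model via a modified valuation $V'$ in which $V'(p)$ is the set of states where $\chi$ holds. A routine induction on formula complexity then shows that the substituted formula is true at $w$ under $V$ exactly when the original is true at $w$ under $V'$, so validity (truth under all valuations) is preserved under substitution. I expect this to be the part requiring the most care to state cleanly, though it is standard.

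The key step I anticipate as the main obstacle is verifying that the rule $bN$ preserves validity, since unlike the axioms it is a conditional rule whose soundness depends on the validity of its premise. Assuming $\varphi \to \psi$ is valid on all frames, I must show $(\circ \varphi \land \varphi) \to (\circ \psi \land \psi)$ is as well. Fixing a model $M$ and state $w$ with $M,w \models \circ \varphi \land \varphi$, validity of the premise immediately gives $M,w \models \psi$, yielding the second conjunct of the consequent. For the first conjunct $\circ \psi$, I would use that $M,w \models \varphi$ rules out the first disjunct of $\circ \varphi$, so the universal clause holds: $M,x \models \varphi$ for all $R$-successors $x$ of $w$; applying validity of $\varphi \to \psi$ at each such $x$ gives $M,x \models \psi$, which is precisely the universal disjunct establishing $M,w \models \circ \psi$. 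The delicate point is that one must invoke the premise's validity at successor states $x$, not merely at $w$, which is exactly why the rule is formulated with the full validity hypothesis rather than as an object-language implication.
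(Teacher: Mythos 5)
Your proof is correct and complete: validity of $b0$, $b1$, and $b2$ follows by direct unwinding of the $\circ$-clause exactly as you argue, Uniform Substitution is handled by the standard re-valuation induction, and your treatment of $bN$ correctly identifies the crucial point that the premise's validity must be invoked at the successor states $x$, not merely at $w$ (so the rule preserves validity, indeed truth-in-a-model, even though it would fail as an object-language implication). The paper itself gives no proof of this theorem---it simply cites Marcos and Steinsvold---and your axiom-by-axiom verification is precisely the standard argument found in those references, so there is no divergence in approach to report.
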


A proof of this result can be found in both \cite{Marcos} and \cite{Steinsvold}.

\section{Completeness}\label{sec: Completeness}

We can prove an immediate completeness result for $\mathbf{B_K}$ by way of a standard canonical model construction. The basic construction is the same as the one in \cite{Steinsvold}. (In light of Theorem \ref{compT}, we will then have that the logic $\mathbf{B_K}$ is sound and complete 
with respect to $\mathbb{C}_\mathbf{T}$, and, in fact, $\mathbb{C}_\mathbf{K}^m$.)  In addition, we will show that this result generalizes to cover a much wider range of modal logics. 

The canonical model $M_{\mathbf{B_K}} = \langle W_{\mathbf{B_K}}, R_{\mathbf{B_K}}, V_{\mathbf{B_K}} \rangle$ is defined as follows:

\begin{itemize}
\item[] $W_{\mathbf{B_K}} := $ the set of all maximal $\mathbf{B_K}$-consistent sets of formulas;
\item[] $R_{\mathbf{B_K}} := \{ \langle x,y\rangle \in W_{\mathbf{B_K}} \times W_{\mathbf{B_K}} : \lambda(x) \subseteq y \}$, for $\lambda(x) := \{ \varphi \in Form_{\mathcal{L}^\circ}: (\varphi \land \circ \varphi) \in x\}$;
\item[] $V_{\mathbf{B_K}}(p) = \{x \in W_{\mathbf{B_K}} \mid p \in x\}$.
\end{itemize}

As a matter of convenience, for the remainder of this section we will omit subscripts.

\begin{remark}
Note, at the outset, that by definition our canonical model is going to be reflexive. That is, it will always be the case that $\lambda(x) \subseteq x$, since if $\varphi \in \lambda(x)$, then it must be that $\varphi, \circ \varphi \in x$.
\end{remark}

\begin{prop}
The relevant version of the Lindenbaum lemma holds. That is, any $B_K$-consistent set of formulas can be extended to a maximal set.
\end{prop}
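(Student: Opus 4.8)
The plan is to carry out the standard Lindenbaum construction, enumerating the formulas of $\mathcal{L}^\circ$ and deciding them one at a time. Since $Var$ is countable, $Form_{\mathcal{L}^\circ}$ is countable, so fix an enumeration $\varphi_0, \varphi_1, \varphi_2, \ldots$ of all $\mathcal{L}^\circ$-formulas. Given a $\mathbf{B_K}$-consistent set $\Sigma$, I would set $\Sigma_0 := \Sigma$ and define a chain recursively: at stage $n+1$, put $\Sigma_{n+1} := \Sigma_n \cup \{\varphi_n\}$ if this set is $\mathbf{B_K}$-consistent, and otherwise put $\Sigma_{n+1} := \Sigma_n \cup \{\neg \varphi_n\}$. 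Finally, take $x := \bigcup_{n \in \omega} \Sigma_n$ and argue that $x$ is a maximal $\mathbf{B_K}$-consistent set.

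The verification proceeds in three routine steps. First, each $\Sigma_n$ is consistent, by induction on $n$: the base case is the hypothesis, and for the inductive step one uses the \emph{fork lemma} — if both $\Sigma_n \cup \{\varphi_n\}$ and $\Sigma_n \cup \{\neg \varphi_n\}$ were inconsistent, then $\Sigma_n \vdash \neg \varphi_n$ and $\Sigma_n \vdash \varphi_n$, whence $\Sigma_n$ would itself be inconsistent. Second, $x$ is consistent: any derivation of $\bot$ from $x$ uses only finitely many premises, all of which lie in some single $\Sigma_n$, contradicting the consistency just established. Third, $x$ is maximal, since for every formula $\varphi_n$ the construction places either $\varphi_n$ or $\neg \varphi_n$ into $\Sigma_{n+1} \subseteq x$, so no consistent proper extension is possible.

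The only point requiring care — and the step I would flag as the main potential obstacle — is that the fork lemma rests on the deduction theorem, i.e.\ on the fact that the consequence relation $\vdash$ behaves classically at the level of premises. This holds because the consequence relation for $\mathbf{B_K}$ is the \emph{local} one: $\Sigma \vdash \varphi$ means that $\varphi$ is obtainable from $\Sigma$ together with the theorems of $\mathbf{B_K}$ using only Modus Ponens. Since $\mathbf{B_K}$ contains all propositional tautologies and is closed under Modus Ponens, the standard propositional deduction theorem applies verbatim to premise sets. Crucially, the rule $bN$ (and likewise Uniform Substitution) applies only to \emph{theorems} $\vdash \varphi \to \psi$, not to arbitrary premises, so it does not interfere with the deduction theorem or with the fork lemma. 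With this observation in place, the entire construction is the classical one, and no features peculiar to the $\circ$-operator are needed.
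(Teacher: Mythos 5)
Your proof is correct and is exactly the standard Lindenbaum argument that the paper itself takes for granted: the proposition is stated there without proof, the construction being deferred to the literature as routine. Your flagged point of care is also the right one---since $bN$ and Uniform Substitution are rules applying only to theorems of $\mathbf{B_K}$ and not to arbitrary premises, the local consequence relation supports the propositional deduction theorem, so the classical fork-lemma construction goes through verbatim.
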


\begin{lemma}
[\cite{Steinsvold}, Propositions 3.1, 3.2 and 3.3]
The following properties hold of $\lambda(x)$.

\begin{enumerate}
\item $\lambda(x) \neq \emptyset$
\item if $\varphi, \psi \in \lambda(x)$ then $\varphi \land \psi \in \lambda(x)$
\item if $\varphi \in \lambda(x)$ and $\mathbf{B_K} \vdash \varphi \to \psi$, then $\psi \in \lambda(x)$
\end{enumerate}

\end{lemma}

We can then obtain the usual truth lemma.

\begin{lemma}
[Truth Lemma]
For any $\mathcal{L}^\circ$-formula $\alpha$, and any maximal set $w$,

\begin{center}
$M_{\mathbf{B_K}}, w \models_{ri} \alpha$ iff $\alpha \in w$.
\end{center}
\end{lemma}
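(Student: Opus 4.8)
The plan is to prove the Truth Lemma by induction on the complexity of the $\mathcal{L}^\circ$-formula $\alpha$, with the only nontrivial case being $\alpha = \circ \varphi$; the propositional cases ($p$, $\neg\varphi$, $\varphi \land \psi$) are handled routinely using maximal consistency. For the base and propositional cases I would appeal to the standard properties of maximal consistent sets: $p \in w$ iff $w \in V(p)$ holds by definition of $V$; negation and conjunction follow from the fact that each $w$ is deductively closed and contains exactly one of $\varphi$, $\neg\varphi$. So the real work is establishing, for the modal case, that $M, w \models_{ri} \circ\varphi$ iff $\circ\varphi \in w$, where the semantic clause reads: $M, w \models_{ri} \circ\varphi$ iff either $M, w \not\models_{ri} \varphi$, or $M, y \models_{ri} \varphi$ for every $y$ with $w R y$.

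First I would prove the direction $\circ\varphi \in w \Rightarrow M, w \models_{ri} \circ\varphi$. Suppose $\circ\varphi \in w$. If $\varphi \notin w$, then by the induction hypothesis $M, w \not\models_{ri}\varphi$, and the first disjunct of the semantic clause is satisfied, so we are done. Otherwise $\varphi \in w$, and together with $\circ\varphi \in w$ this gives $(\varphi \land \circ\varphi) \in w$, i.e.\ $\varphi \in \lambda(w)$. Now take any $y$ with $w R y$, which by definition of $R$ means $\lambda(w) \subseteq y$; hence $\varphi \in y$, and by the induction hypothesis $M, y \models_{ri}\varphi$. Thus the second disjunct holds and $M, w \models_{ri}\circ\varphi$.

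The harder direction is the converse: $M, w \models_{ri}\circ\varphi \Rightarrow \circ\varphi \in w$, and equivalently I would prove the contrapositive $\circ\varphi \notin w \Rightarrow M, w \not\models_{ri}\circ\varphi$. Assume $\circ\varphi \notin w$. By maximality $\bullet\varphi \in w$ (recall $\bullet\varphi := \neg\circ\varphi$). Using the theorem $\varphi \lor \circ\varphi$ from Proposition \ref{BKThms}(2), since $\circ\varphi \notin w$ we get $\varphi \in w$, so by the induction hypothesis $M, w \models_{ri}\varphi$; this rules out the first disjunct of the semantic clause. To defeat the second disjunct I must produce a witness: a maximal set $y$ with $\lambda(w) \subseteq y$ but $\varphi \notin y$. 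This is the crux of the argument. The natural strategy is to show that the set $\lambda(w) \cup \{\neg\varphi\}$ is $\mathbf{B_K}$-consistent and then extend it to a maximal set $y$ via the Lindenbaum lemma; this $y$ will satisfy $w R y$ by construction while $\neg\varphi \in y$ forces $\varphi \notin y$, whence $M, y \not\models_{ri}\varphi$ by the induction hypothesis, defeating the second disjunct.

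The main obstacle, therefore, is verifying that $\lambda(w) \cup \{\neg\varphi\}$ is consistent. I would argue by contradiction: if it were inconsistent, there would be finitely many $\psi_1, \dots, \psi_n \in \lambda(w)$ with $\mathbf{B_K} \vdash (\psi_1 \land \cdots \land \psi_n) \to \varphi$. Using the closure properties of $\lambda(w)$ from the preceding lemma, in particular closure under conjunction (item 2) and under provable consequence (item 3), this would give $\varphi \in \lambda(w)$; but $\varphi \in \lambda(w)$ means $(\varphi \land \circ\varphi) \in w$, hence $\circ\varphi \in w$, contradicting our assumption $\circ\varphi \notin w$. (Note that closure of $\lambda(w)$ under provable consequence is exactly where the rule $bN$, packaged into the lemma on $\lambda$, does its work — this is what makes $\lambda(w)$ behave like a "box-type" theory and lets the canonical relation $R$ function correctly.) This establishes consistency, the Lindenbaum extension supplies the required witness $y$, and the induction is complete.
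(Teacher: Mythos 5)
Your proof is correct and follows exactly the route the paper intends: the paper defers the proof to Steinsvold but sets up precisely the machinery you use (the Lindenbaum lemma, the closure properties of $\lambda(w)$, and the theorem $\varphi \lor \circ\varphi$ from Proposition \ref{BKThms}), with the crux being the consistency of $\lambda(w) \cup \{\neg\varphi\}$ and a Lindenbaum extension to produce the witness. The only negligible point left implicit is the degenerate case of the consistency argument where $\vdash \varphi$ outright, which is handled by the nonemptiness of $\lambda(w)$ (item 1 of the cited lemma) together with closure under provable consequence.
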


Again, the proof is in \cite{Steinsvold}.

\begin{teo}
The logic $\mathbf{B_K}$ is strongly complete with respect to the class of all frames. 
\end{teo}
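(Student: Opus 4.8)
The plan is to read ``strongly complete with respect to the class of all frames'' in its standard satisfiability form and to reduce the theorem to the single claim that \emph{every $\mathbf{B_K}$-consistent set of $\mathcal{L}^\circ$-formulas is satisfiable at some state of a model based on a frame in the class of all frames}. To see that this claim suffices, I would argue contrapositively: suppose $\Gamma \not\vdash_{\mathbf{B_K}} \alpha$. Since $\mathbf{B_K}$ contains all propositional tautologies and is closed under Modus Ponens, the usual deduction-theorem reasoning shows that $\Gamma \cup \{\neg \alpha\}$ is $\mathbf{B_K}$-consistent. The claim then furnishes a model $M$ and a state $w$ with $M, w \models_{ri} \gamma$ for every $\gamma \in \Gamma \cup \{\neg\alpha\}$, so that $M, w \models_{ri} \gamma$ for all $\gamma \in \Gamma$ while $M, w \not\models_{ri} \alpha$; hence $\Gamma \not\models_{ri} \alpha$ over the class of all frames, which is exactly what strong completeness requires.

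It remains to establish the satisfiability claim, and here the canonical model $M_{\mathbf{B_K}} = \langle W, R, V\rangle$ already assembled above does all the work. Given a $\mathbf{B_K}$-consistent set $\Gamma$, I would first invoke the Lindenbaum lemma to extend $\Gamma$ to a maximal $\mathbf{B_K}$-consistent set $w$, which is by definition an element of $W$ (and $W$ is nonempty precisely because $\mathbf{B_K}$ is itself consistent, so at least one such $w$ exists). I would then apply the Truth Lemma, which gives $M_{\mathbf{B_K}}, w \models_{ri} \beta$ iff $\beta \in w$ for every $\mathcal{L}^\circ$-formula $\beta$. Since $\Gamma \subseteq w$, every member of $\Gamma$ is true at $w$ in $M_{\mathbf{B_K}}$, and because $\langle W, R\rangle$ is trivially a frame in the class of all frames, $\Gamma$ is satisfied as required.

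I do not expect any genuine obstacle in the completeness argument as such: all of its difficulty has been front-loaded into the Truth Lemma, whose $\circ$-clause is the only delicate point, and which is precisely where the definition of $R$ via $\lambda(x) := \{\varphi : (\varphi \land \circ\varphi) \in x\}$ and the closure properties of $\lambda$ recorded above are consumed. The two places that still warrant a line of care are the conversion of $\Gamma \not\vdash_{\mathbf{B_K}} \alpha$ into consistency of $\Gamma \cup \{\neg\alpha\}$ (routine, given that the propositional base and Modus Ponens are available) and the observation that the canonical frame is an admissible frame at all, i.e.\ that $W \neq \emptyset$. Once these are noted, the theorem is a short assembly of the Lindenbaum lemma and the Truth Lemma.
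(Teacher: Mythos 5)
Your proof is correct and follows essentially the same route as the paper: the paper's (implicit) argument is exactly the assembly of the canonical model construction, the Lindenbaum lemma, and the Truth Lemma that precede the theorem, which is what you spell out, together with the routine reduction of strong completeness to satisfiability of consistent sets. The only feature worth noting is that the canonical frame here is reflexive by construction (as the paper remarks), but since a reflexive frame is still a frame, this does not affect your argument --- it only matters for the paper's stronger parenthetical claim that $\mathbf{B_K}$ is also complete with respect to $\mathbb{C}_\mathbf{T}$ and $\mathbb{C}_\mathbf{K}^m$.
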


\subsection{Generalized Completeness}

We can generalize the above completeness result significantly. In particular, will show that completeness results for normal modal logics can, under quite general circumstances, be imported into the setting of $RI$-logics.   We require, however, a translation between the formulas of these languages.

\begin{defi}\label{ballification}
Define the following translation from formulas of $\mathcal{L}^\Box$ 
to formulas of $\mathcal{L}^\circ$. 

\begin{tabular}{rll}
$p^\circ$ \text{ }& $=$ \text{ }& $p$ \\
$(\neg \varphi)^\circ$ \text{ }& $=$ & $\neg(\varphi^\circ)$\\
$(\varphi \land \psi)^\circ$ \text{ }& $=$ & $\varphi^\circ \land \psi^\circ$ \\
$(\Box \varphi)^\circ$ \text{ }& $=$ & $\circ (\varphi^\circ) \land \varphi^\circ$ \\
\end{tabular}

In addition, for a normal modal logic $\mathbf{L}$,  define $\mathbf{L}^\circ$ to be the smallest $RI$-logic containing $\varphi^\circ$ for every $\varphi \in \mathbf{L}$.
\end{defi}

\begin{teo}\label{BK=K}
$\mathbf{K}^\circ = \mathbf{B_K}$. That is, $\mathbf{K}^\circ$ is the smallest $RI$-logic.

\end{teo}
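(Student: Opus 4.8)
The plan is to prove the two inclusions $\mathbf{B_K} \subseteq \mathbf{K}^\circ$ and $\mathbf{K}^\circ \subseteq \mathbf{B_K}$ separately, since $\mathbf{K}^\circ$ is defined as the smallest $RI$-logic containing every $\varphi^\circ$ for $\varphi \in \mathbf{K}$, and $\mathbf{B_K}$ is by definition the smallest $RI$-logic of all.

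For the inclusion $\mathbf{B_K} \subseteq \mathbf{K}^\circ$, I would observe that $\mathbf{K}^\circ$ is an $RI$-logic by construction, and $\mathbf{B_K}$ is the \emph{smallest} $RI$-logic; hence $\mathbf{B_K} \subseteq \mathbf{K}^\circ$ is immediate, with no work required beyond citing the definitions. This is the easy half.

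The substantive direction is $\mathbf{K}^\circ \subseteq \mathbf{B_K}$. Since $\mathbf{K}^\circ$ is the smallest $RI$-logic containing $\{\varphi^\circ : \varphi \in \mathbf{K}\}$, and since $\mathbf{B_K}$ is already an $RI$-logic (closed under Modus Ponens, $bN$, and Uniform Substitution, and containing the propositional tautologies and $b0$, $b1$, $b2$), it suffices to show that $\varphi^\circ \in \mathbf{B_K}$ for every theorem $\varphi$ of $\mathbf{K}$. I would prove this by induction on the length of a derivation of $\varphi$ in $\mathbf{K}$, using a standard Hilbert-style axiomatization of $\mathbf{K}$: propositional tautologies, the normality axiom $\Box(\varphi \to \psi) \to (\Box\varphi \to \Box\psi)$, and the rules Modus Ponens and Necessitation (from $\vdash \varphi$ infer $\vdash \Box\varphi$), together with Uniform Substitution. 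For the base cases, the $\circ$-translation of a propositional tautology is again a propositional tautology (since $\cdot^\circ$ commutes with $\neg$ and $\land$ and fixes variables), hence lies in $\mathbf{B_K}$; the key base case is to verify that the translation of the normality axiom, which unfolds to $(\circ(\varphi^\circ \to \psi^\circ) \land (\varphi^\circ \to \psi^\circ)) \to ((\circ\varphi^\circ \land \varphi^\circ) \to (\circ\psi^\circ \land \psi^\circ))$, is a theorem of $\mathbf{B_K}$. For the inductive step, Modus Ponens is handled by noting $(\varphi \to \psi)^\circ = \varphi^\circ \to \psi^\circ$ and that $\mathbf{B_K}$ is closed under Modus Ponens; Necessitation is handled by observing $(\Box\varphi)^\circ = \circ\varphi^\circ \land \varphi^\circ$ and that if $\varphi^\circ \in \mathbf{B_K}$ then by the derived rule of Proposition~\ref{BKThms} (from $\vdash \varphi^\circ$ one obtains $\vdash \circ\varphi^\circ$) we get $\circ\varphi^\circ \in \mathbf{B_K}$, whence the conjunction is in $\mathbf{B_K}$; Uniform Substitution is compatible with $\cdot^\circ$ since the translation treats variables uniformly.

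The main obstacle I anticipate is verifying the translated normality axiom inside $\mathbf{B_K}$, since this is where the genuine modal content lives. This step should be discharged using Proposition~\ref{BKThms}, specifically item~3, namely $\varphi \to (\circ(\varphi \to \psi) \to (\circ\varphi \to \circ\psi))$, which is precisely the $RI$-analog of the distribution axiom and is exactly what one needs after the conjunct $\varphi^\circ$ (coming from the antecedent $\Box(\varphi\to\psi)$ translation and from $\Box\varphi$) is made available; combined with the $b2$-flavored closure of $\lambda$ and some propositional reasoning to manage the extra conjuncts introduced by the clause $(\Box\chi)^\circ = \circ\chi^\circ \land \chi^\circ$, this yields the desired theorem. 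Once these pieces are assembled, the induction closes and both inclusions give $\mathbf{K}^\circ = \mathbf{B_K}$.
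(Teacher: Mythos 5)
Your proposal is correct, and its overall skeleton (easy inclusion by minimality of $\mathbf{B_K}$; hard inclusion by induction on $\mathbf{K}$-derivations, with Modus Ponens and Uniform Substitution immediate and Necessitation handled by the derived rule of Proposition~\ref{BKThms}) matches the paper's proof. Where you genuinely diverge is on the key base case, the translated normality axiom. The paper argues \emph{semantically}: it shows that $(\circ(\varphi^\circ \to \psi^\circ) \land (\varphi^\circ \to \psi^\circ)) \to ((\circ \varphi^\circ \land \varphi^\circ) \to (\circ \psi^\circ \land \psi^\circ))$ is valid on all frames, and then invokes the previously established completeness of $\mathbf{B_K}$ to conclude it is a theorem. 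You instead derive it \emph{syntactically} from item~3 of Proposition~\ref{BKThms}, $\varphi \to (\circ(\varphi \to \psi) \to (\circ\varphi \to \circ\psi))$, and this does work: assuming $\circ(\varphi^\circ\to\psi^\circ)$, $\varphi^\circ\to\psi^\circ$, $\circ\varphi^\circ$, and $\varphi^\circ$, three applications of Modus Ponens against item~3 yield $\circ\psi^\circ$, while $\varphi^\circ$ and $\varphi^\circ\to\psi^\circ$ yield $\psi^\circ$; purely propositional reasoning then closes the implication. Your route buys self-containment: it makes the theorem independent of the canonical-model completeness theorem for $\mathbf{B_K}$, which is attractive since that theorem is conceptually downstream of the axiomatics. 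The paper's route avoids any Hilbert-style manipulation by reusing machinery it has already built, at the cost of making a purely syntactic identity of logics depend on a semantic detour. One small caveat for your version: the paper states item~3 of Proposition~\ref{BKThms} without proof (it only derives the two rules), so to be fully self-contained you would need to supply that derivation from $b0$--$b2$ and $bN$ as well.
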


\begin{proof}
Clearly, $\mathbf{B_K} \subseteq \mathbf{K}^\circ$. 

In the other direction, we will show that if $\alpha$ is a theorem of $\mathbf{K}$, then $\alpha^\circ$ is a theorem of $\mathbf{B_K}$, and so $\mathbf{K}^\circ \subseteq \mathbf{B_K}$. We can achieve this by way of an induction on proofs.

First, if $\alpha$ is an instance of the $K$ schema, then it is of the form $\Box(\varphi \to \psi) \to (\Box \varphi \to \Box \psi)$. In this case, $\alpha^\circ$ is 

$$(\circ(\varphi^\circ \to \psi^\circ) \land (\varphi^\circ \to \psi^\circ)) \to ((\circ \varphi ^\circ \land \varphi^\circ) \to (\circ \psi^\circ \land \psi^\circ)).$$ 

Assume that this formula is not valid. In this case, there would be a model $M$ and world $w$, at which

$$M,w \models_{ri} \circ(\varphi^\circ \to \psi^\circ) \land (\varphi^\circ \to \psi^\circ)$$

\noindent
but at which 

$$M,w \not\models_{ri} (\circ \varphi ^\circ \land \varphi^\circ) \to (\circ \psi^\circ \land \psi^\circ).$$

It must then be the case that 

$$M,w \models_{ri} \circ \varphi ^\circ \land \varphi^\circ$$

\noindent
while

$$M,w \not\models_{ri} \circ \psi^\circ \land \psi^\circ.$$

Note that it is impossible for $M,w \not\models_{ri} \psi^\circ$, because we have that $M,w \models_{ri} \varphi^\circ$ and also $M,w \models_{ri} \varphi^\circ \to \psi^\circ$.

Therefore, it must then be the case that 

$$M,w \not\models_{ri} \circ \psi ^\circ.$$

From the semantic clause governing $\circ$, this entails that $M,w \models_{ri} \psi^\circ$ but that there exists some $y$ s.t.\ $wRy$ and $M,y \not\models_{ri} \psi^\circ$.

However, in light of the fact that $M,w \models_{ri} \circ \varphi ^\circ \land \varphi^\circ$,  $\varphi^\circ$ must hold at $y$. That is,

$$M,y \models_{ri} \varphi ^\circ.$$

Also, because $M,w \models_{ri} \circ(\varphi^\circ \to \psi^\circ) \land (\varphi^\circ \to \psi^\circ)$, we also have that 

$$M,y \models_{ri} \varphi ^\circ \to \psi^\circ.$$

This gives $M,y \models_{ri} \psi^\circ$, a contradiction. Therefore, all translation instances of the $K$ schema are valid. From the completeness result of $\mathbf{B_K}$ above, they must also be theorems of $\mathbf{B_K}$.

Lastly, $\alpha$ might be the result of applying a rule of inference to some formulas (the translations of which are already in $\mathbf{B_K}$). The cases for Modus Ponens and Uniform Substitution are immediate, from the definition of the $\circ$ translation.

In case $\alpha$ is the result of applying necessitation to some $\beta$, then $\alpha$ is of the form $\Box \beta$. But $(\Box \beta) ^\circ$ is just $\circ \beta ^\circ \land \beta^\circ$. From our assumption we already know that $\beta^\circ \in \mathbf{B_K}$, and so $\circ \beta^\circ$ is also in $\mathbf{B_K}$, from proposition \ref{BKThms}. Therefore, so is $\circ \beta ^\circ \land \beta^\circ$.

This completes the proof that  $\mathbf{K}^\circ \subseteq \mathbf{B_K}$. Therefore,  $\mathbf{K}^\circ = \mathbf{B_K}$.

\end{proof}

In light of this result, we will henceforth refer to the minimal $RI$-logic as $\mathbf{K}^\circ$. Additionally, notice that $(\Box \varphi \to \varphi)^\circ = (\circ (\varphi^\circ) \land \varphi^\circ) \to \varphi^\circ$ is a tautology in $\mathbf{K}^\circ$, and so $\mathbf{T}^\circ = \mathbf{K}^\circ$. In light of Theorem \ref{compT}, this should not be surprising.

By utilizing this translation, we can obtain completeness results for a wide variety of $RI$-logics.

In order to do so, recall the following, standard, definitions and results (see \cite{Blackburn} for all details).

\begin{defi}[Bounded Morphism]
Let $F_1=\langle W_1, R_1 \rangle$ and $F_2=\langle W_2, R_2 \rangle$ be frames. Then $f: W_1 \to W_2$ is a bounded morphism from $F_1$ to $F_2$ when the following two conditions are met:

\begin{itemize}
\item[] (forth) $xR_1y$ implies $f(x)R_2f(y)$;
\item[] (back) if $f(x)R_2z$, then there is a $w$ s.t.\ $xR_1w$ and $f(w)=z$.
\end{itemize}

When there is a surjective bounded morphism from $F_1$ onto $F_2$, written $F_1 \twoheadrightarrow F_2$, $F_2$ is said to be a bounded morphic image of $F_1$.
\end{defi}

\begin{defi}[Generated Subframe]
Let $F_1=\langle W_1, R_1 \rangle$ and $F_2=\langle W_2, R_2 \rangle$ be frames. $F_2$ is a generated subframe of $F_1$, written $F_2 \rightarrowtail F_1$, when $F_2$ is a subframe of $F_1$ and the following condition holds:

\begin{itemize}
\item[] if $x \in W_2$ and $xR_1y$, then $y \in W_2$.
\end{itemize}
\end{defi}

\begin{teo}\label{ValidityPreservation}
Let $F_1$ and $F_2$ be frames and $\alpha$ a modal formula.

\begin{itemize}
\item[] If $F_1 \rightarrowtail F_2$, then $F_2 \models \alpha$ implies $F_1 \models \alpha$;
\item[] If $F_1 \twoheadrightarrow F_2$, then $F_1 \models \alpha$ implies $F_2 \models \alpha$.
\end{itemize}
\end{teo}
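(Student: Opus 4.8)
The plan is to reduce both halves to invariance lemmas at the level of \emph{models}, each established by a routine induction on the complexity of $\alpha$, and then to transfer countermodels by manipulating valuations. These are standard facts (see \cite{Blackburn}), so I would only sketch the structure and isolate the one step that carries the content.

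For the first part, suppose $F_1 \rightarrowtail F_2$, so that $W_1 \subseteq W_2$ and $W_1$ is closed under $R_2$. First I would establish the \emph{generated submodel lemma}: if $M_2 = \langle F_2, V_2 \rangle$ is any model and $M_1 = \langle F_1, V_1 \rangle$ is its restriction to $W_1$ (that is, $V_1(p) = V_2(p) \cap W_1$), then for every $w \in W_1$ and every formula $\beta$ we have $M_1, w \models \beta$ iff $M_2, w \models \beta$. The atomic and Boolean cases are immediate, and the $\Box$ case uses precisely that $W_1$ is $R_2$-closed: every $R_2$-successor of a point in $W_1$ again lies in $W_1$, so the two frames present the same successors to $w$. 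I then argue by contraposition. If $F_2 \models \alpha$ but $F_1 \not\models \alpha$, choose a model $M_1$ on $F_1$ and a point $w \in W_1$ with $M_1, w \not\models \alpha$, extend $V_1$ arbitrarily to a valuation $V_2$ on all of $W_2$ agreeing with $V_1$ on $W_1$, and apply the lemma to obtain $M_2, w \not\models \alpha$, contradicting $F_2 \models \alpha$.

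For the second part, suppose $f : F_1 \twoheadrightarrow F_2$ is a surjective bounded morphism. The key tool is the \emph{bounded morphism lemma}: whenever $M_1 = \langle F_1, V_1 \rangle$ and $M_2 = \langle F_2, V_2 \rangle$ are models for which $f$ respects valuations (that is, $x \in V_1(p)$ iff $f(x) \in V_2(p)$ for all $p$ and all $x \in W_1$), one has $M_1, x \models \beta$ iff $M_2, f(x) \models \beta$ for every $x \in W_1$ and every $\beta$. The forth condition drives the $\Box$ case from $M_1$ to $M_2$, and the back condition drives the converse. For validity preservation I again argue by contraposition: if $F_1 \models \alpha$ but $F_2 \not\models \alpha$, take a model $M_2$ on $F_2$ with $M_2, v \not\models \alpha$, pull its valuation back along $f$ by setting $V_1(p) = \{ x \in W_1 : f(x) \in V_2(p) \}$ (so that $f$ respects valuations by construction), use surjectivity to pick $x$ with $f(x) = v$, and apply the lemma to get $M_1, x \not\models \alpha$, contradicting $F_1 \models \alpha$.

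The main obstacle—indeed the only nontrivial point—is the $\Box$ step of the bounded morphism lemma in the direction using the back condition: to show that $M_2, f(x) \models \Box \varphi$ forces $M_1, x \models \Box \varphi$, one must verify $\varphi$ at an arbitrary $R_1$-successor $y$ of $x$, and for this one applies (forth) to get $f(x) R_2 f(y)$, hence $M_2, f(y) \models \varphi$, and then the induction hypothesis. The back condition is what guarantees this matching of successors is symmetric, so that no successor of $f(x)$ is missed; everything else is bookkeeping.
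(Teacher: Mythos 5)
Your proof is correct and follows exactly the route the paper itself relies on: the paper offers no proof of this theorem at all, citing it as a standard result from \cite{Blackburn}, and your generated-submodel lemma, bounded-morphism lemma, and countermodel-transfer arguments are precisely that textbook argument. The only blemish is a label swap in your final paragraph: the direction $M_2, f(x) \models \Box\varphi \Rightarrow M_1, x \models \Box\varphi$ uses (forth), as your own description of the step shows, while (back) is what handles the converse direction, where every $R_2$-successor of $f(x)$ must be realized as the image of an $R_1$-successor of $x$; this mislabeling does not affect the correctness of the mathematics.
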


\begin{defi}[Canonical Logic]
A normal modal logic $\mathbf{L}$ is said to be canonical when the frame of its canonical model is an $\mathbf{L}$-frame. (That is, when all $\mathbf{L}$-theorems are valid on the canonical frame.)
\end{defi}

Our goal is to prove the following.

\begin{teo}\label{completeness}
Let $\mathbf{L}$ be a normal modal logic that is canonical. Furthermore, let its canonical frame be contained in the class $\mathbb{C}_\mathbf{L}$. Then $\mathbf{L}^\circ$ is also complete with respect to $\mathbb{C}_\mathbf{L}$.
\end{teo}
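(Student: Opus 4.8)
The plan is to route completeness of $\mathbf{L}^{\circ}$ through the ordinary completeness of the normal logic $\mathbf{L}$, using a \emph{back-translation} $(\cdot)^{\Box}$ from $\mathcal{L}^{\circ}$ to $\mathcal{L}^{\Box}$ that undoes $(\cdot)^{\circ}$. I would set $p^{\Box}=p$, $(\neg\alpha)^{\Box}=\neg\alpha^{\Box}$, $(\alpha\wedge\beta)^{\Box}=\alpha^{\Box}\wedge\beta^{\Box}$, and $(\circ\alpha)^{\Box}=\alpha^{\Box}\to\Box\alpha^{\Box}$ (recall that $\circ$ is definable in $\mathcal{L}^{\Box}$ as $\varphi\to\Box\varphi$). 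The first thing I would establish, by a routine induction on $\alpha$, is a \emph{semantic bridge}: for every model $M$, every world $w$, and every $\alpha\in\mathcal{L}^{\circ}$,
\[
M,w\models_{ri}\alpha \quad\text{iff}\quad M,w\models\alpha^{\Box},
\]
where the right-hand side uses the standard $\Box$-clause. The only nontrivial case is $\circ$, and there the equivalence is immediate, since the $\models_{ri}$-clause for $\circ\alpha$ is literally the $\Box$-clause for $\alpha^{\Box}\to\Box\alpha^{\Box}$.

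Next I would prove a \emph{syntactic retraction}: for every $\alpha\in\mathcal{L}^{\circ}$,
\[
\mathbf{K}^{\circ}\vdash (\alpha^{\Box})^{\circ}\leftrightarrow\alpha .
\]
This is again an induction on $\alpha$; for $\alpha=\circ\beta$ the step reduces, after applying the derived replacement rule of Proposition \ref{BKThms}, to $\mathbf{K}^{\circ}\vdash(\beta\to\circ\beta)\leftrightarrow\circ\beta$, which follows from the theorem $\beta\lor\circ\beta$ of Proposition \ref{BKThms} by propositional reasoning. From this I extract the only implication I actually need, the \emph{transfer direction}: if $\alpha^{\Box}\in\mathbf{L}$ then $(\alpha^{\Box})^{\circ}\in\mathbf{L}^{\circ}$ by the very definition of $\mathbf{L}^{\circ}$, and hence $\alpha\in\mathbf{L}^{\circ}$, since $\mathbf{K}^{\circ}$ is the least $RI$-logic and $\mathbf{L}^{\circ}$ is one. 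Contraposing, $\alpha\notin\mathbf{L}^{\circ}$ implies $\alpha^{\Box}\notin\mathbf{L}$.

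With these two facts the theorem assembles quickly. Suppose $\alpha\notin\mathbf{L}^{\circ}$; then $\alpha^{\Box}\notin\mathbf{L}$. Since $\mathbf{L}$ is canonical, its canonical model $M_{\mathbf{L}}$ is based on a frame $F_{\mathbf{L}}\in\mathbb{C}_{\mathbf{L}}$, and by the ordinary truth lemma for $\mathbf{L}$ there is a maximal $\mathbf{L}$-consistent world $x$ with $\alpha^{\Box}\notin x$, i.e.\ $M_{\mathbf{L}},x\not\models\alpha^{\Box}$. The semantic bridge then gives $M_{\mathbf{L}},x\not\models_{ri}\alpha$, so $\alpha$ is refuted, under the $\models_{ri}$ semantics, by a model based on a frame in $\mathbb{C}_{\mathbf{L}}$. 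Hence $\alpha$ is not $ri$-valid on $\mathbb{C}_{\mathbf{L}}$. Contraposing yields completeness, and the same argument applied to a consistent \emph{set} of premisses (whose back-translations stay $\mathbf{L}$-consistent by the transfer direction) gives strong completeness.

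I expect the delicate point to be the retraction step, and more conceptually the choice to work inside $M_{\mathbf{L}}$ rather than inside the $RI$-canonical model of $\mathbf{L}^{\circ}$. The latter is reflexive by construction (cf.\ the Remark), so it need not itself lie in $\mathbb{C}_{\mathbf{L}}$; transferring a refutation from that reflexive canonical model back to a genuine $\mathbf{L}$-frame is exactly where one would have to exploit insensitivity to reflexivity, and I suspect this is the role of the model-theoretic technique of \cite{GoldblattMares}. Routing through the back-translation and $M_{\mathbf{L}}$ sidesteps the reflexivity gap at the cost of the small syntactic lemma. Finally, it is worth stressing that only the transfer direction $\alpha^{\Box}\in\mathbf{L}\Rightarrow\alpha\in\mathbf{L}^{\circ}$ is used here; its converse fails in general (when $\mathbf{L}$ does not contain $\mathbf{T}$, $(\varphi^{\circ})^{\Box}$ is equivalent to $\Box\varphi\wedge\varphi$ rather than to $\varphi$), which is precisely why the matching \emph{soundness} statement is not automatic and must be secured by a further condition such as robustness under reflexivity.
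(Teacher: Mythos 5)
Your proof is correct, and it takes a genuinely different route from the paper's. The paper uses the Goldblatt--Mares technique: starting from a surjection of $Var$ onto $Form_{\mathcal{L}^\circ}$, it builds an injective bounded morphism $f$ from the canonical frame of $\mathbf{L}^\circ$ into the canonical frame of $\mathbf{L}$, obtaining $F_{\mathbf{L}^\circ}\cong F_{sub}\rightarrowtail F_{\mathbf{L}}$; a non-theorem $\alpha$ of $\mathbf{L}^\circ$ is refuted on $F_{\mathbf{L}^\circ}$ by the $RI$-truth lemma, and the refutation is transported to $F_{\mathbf{L}}\in\mathbb{C}_{\mathbf{L}}$ via preservation of validity under generated subframes (Theorem \ref{ValidityPreservation}). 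You never touch the canonical model of $\mathbf{L}^\circ$ at all: your semantic bridge is just the definability of $\circ$ as $\varphi\to\Box\varphi$, evaluated in one and the same model (note this is different from, and simpler than, the paper's Lemma \ref{bridge}, which compares $M$ with its reflexive closure $M^r$ and serves the soundness theorem, not completeness), and your retraction lemma checks out: in the $\circ$-case, the replacement rule of Proposition \ref{BKThms} reduces it to $\mathbf{K}^\circ\vdash(\beta\to\circ\beta)\leftrightarrow\circ\beta$, which indeed follows propositionally from the theorem $\beta\lor\circ\beta$, and the transfer direction then needs only the definition of $\mathbf{L}^\circ$ together with $\mathbf{K}^\circ\subseteq\mathbf{L}^\circ$ (Theorem \ref{BK=K}). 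As for what each approach buys: yours is more elementary (no frame morphisms, no truth lemma for $\mathbf{L}^\circ$), it delivers strong completeness essentially for free, and it is in fact more general---you use the canonical model of $\mathbf{L}$ only as a supplier of countermodels, so the same argument shows that completeness of $\mathbf{L}$ with respect to an \emph{arbitrary} class $\mathbb{C}$ transfers to completeness of $\mathbf{L}^\circ$ with respect to $\mathbb{C}$, canonicity playing no essential role beyond securing the hypothesis $F_{\mathbf{L}}\in\mathbb{C}_{\mathbf{L}}$. What the paper's embedding buys, and your argument does not, is a structural by-product of independent interest: the canonical frame of $\mathbf{L}^\circ$ is isomorphic to a generated subframe of $F_{\mathbf{L}}$ and is therefore itself an $\mathbf{L}$-frame, in effect a canonicity statement for $\mathbf{L}^\circ$. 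Finally, your closing diagnosis of why soundness does not come along for the ride (the composite $(\varphi^\circ)^\Box$ is $\Box\varphi\land\varphi$, which collapses to $\Box\varphi$'s intended meaning only over reflexive frames) is exactly the asymmetry the paper addresses in Section \ref{sec:Soundness} with robustness under reflexivity.
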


We will proceed by constructing an isomorphism between the canonical model for  $\mathbf{L}^\circ$ and a generated subframe of the canonical model for $\mathbf{L}$. Specifically, we will construct an injective bounded morphism from the canonical model of $\mathbf{L}^\circ$ to that of $\mathbf{L}$. This is a proof strategy taken from \cite{GoldblattMares}. We only modify their technique slightly, to accommodate for the $\circ$-operator in our logics.

Consider a mapping from $Var$ onto $Form_{\mathcal{L}^\circ}$:

\begin{center}
\begin{tabular}{rll}
$Var$ \text{ }& $\to$ \text{ }& $Form_{\mathcal{L}^\circ}$\\
$p$ \text{ }& $\mapsto$ & $p^*$ 
\end{tabular}
\end{center}

This map exists since our sets of formulas are countable. 
Now extend it recursively to a map: 

\begin{center}
\begin{tabular}{rll}
$Form_{\mathcal{L}^\Box}$ \text{ }& $\to$ \text{ }& $Form_{\mathcal{L}^\circ}$\\
$\alpha$ \text{ }& $\mapsto$ \text{ }& $\alpha^*$ 
\end{tabular}
\end{center}

\noindent
where $\alpha^*$ is defined similarly to Definition \ref{ballification}: 

\begin{tabular}{rll}
$(\neg \varphi)^*$ \text{ }& $=$ \text{ }& $\neg(\varphi^*)$\\
$(\varphi \land \psi)^*$ \text{ }& $=$ & $\varphi^* \land \psi^*$ \\
$(\Box \varphi)^*$ \text{ }& $=$ & $\circ (\varphi^*) \land \varphi^*$ \\
\end{tabular}

Except for the last clause, the above map consists in a uniform substitution of $p^*$ for $p$ in $\alpha$. 
We call the above function the $*$-map. 

Because of how $\mathbf{L}^\circ$ is defined, the $*$-map preserves theoremhood. That is, if $\beta$ is a theorem of $\mathbf{L}$, then $\beta^*$ is a theorem of $\mathbf{L}^\circ$.

Let $F_{\mathbf{L}^\circ} = \langle W_{\mathbf{L}^\circ}, R_{\mathbf{L}^\circ}\rangle$ 
be the canonical frame for $\mathbf{L}^\circ$, as we constructed it before, 
and let $F_{\mathbf{L}} = \langle W_{\mathbf{L}}, R_{\mathbf{L}}\rangle$ be the canonical frame for $\mathbf{L}$ as it is usually defined. 

(As a remark, we notice that if the set of axioms of $\mathbf{L}^\circ$ gives rise to an inconsistent system, then 
the corresponding logic is indeed complete with respect to any class of frames, since everything is provable.
Hence, from now on, we will just assume that $\mathbf{L}^\circ$ is a consistent axiomatic system, and thus the set $W_{\mathbf{L}^\circ}$ is non-empty.)

We can then define the following function: 

\begin{center}
\begin{tabular}{rll}
$f: W_{\mathbf{L}^\circ}$ \text{ }& $\to$ \text{ }& $W_{\mathbf{L}}$\\
$a$ \text{ }& $\mapsto$ & $\{\alpha : \alpha^* \in a\} = f(a)$ 
\end{tabular}
\end{center}

\noindent
for any maximal consistent $a \in W_{\mathbf{L}^\circ}$. In order to show that the above is a meaningful definition, we have to verify that $f(a)$ is indeed an element of $W_{\mathbf{L}}$.

\begin{prop}
The set $f(a)$ is maximal and $\mathbf{L}$-consistent. 
\end{prop}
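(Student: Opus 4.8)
The plan is to verify the two defining properties of membership in $W_{\mathbf{L}}$ separately: that $f(a)$ is $\mathbf{L}$-consistent, and that it is maximal. Throughout, I would lean on the fact (noted just above the statement) that the $*$-map preserves theoremhood, so that $\mathbf{L} \vdash \beta$ implies $\mathbf{L}^\circ \vdash \beta^*$, together with the observation that $*$ commutes with the Boolean connectives by definition. The key structural feature I expect to exploit is that $*$ is injective on the propositional skeleton (being essentially a uniform substitution $p \mapsto p^*$ on variables, with the single nonstandard clause for $\Box$), and that it respects negation and conjunction exactly.

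First I would establish maximality, which is the more routine half. Given any $\mathcal{L}^\Box$-formula $\alpha$, I want to show exactly one of $\alpha$, $\neg\alpha$ lies in $f(a)$. By definition $\alpha \in f(a)$ iff $\alpha^* \in a$, and $\neg\alpha \in f(a)$ iff $(\neg\alpha)^* = \neg(\alpha^*) \in a$. Since $a$ is a maximal consistent set in $W_{\mathbf{L}^\circ}$, exactly one of $\alpha^*$ and $\neg(\alpha^*)$ belongs to $a$; transporting this back through the definition of $f$ gives exactly one of $\alpha$, $\neg\alpha$ in $f(a)$, which is maximality. The crucial input here is that $(\neg\alpha)^* = \neg(\alpha^*)$, so that the maximality of $a$ transfers cleanly.

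Next I would argue $\mathbf{L}$-consistency. Suppose for contradiction that $f(a)$ is inconsistent, so there are finitely many $\alpha_1, \dots, \alpha_n \in f(a)$ with $\mathbf{L} \vdash \neg(\alpha_1 \land \cdots \land \alpha_n)$. Applying the $*$-map and using that it preserves theoremhood and commutes with $\neg$ and $\land$, I obtain $\mathbf{L}^\circ \vdash \neg(\alpha_1^* \land \cdots \land \alpha_n^*)$. But each $\alpha_i \in f(a)$ means $\alpha_i^* \in a$ by definition of $f$, so all the $\alpha_i^*$ are members of the $\mathbf{L}^\circ$-consistent set $a$, contradicting the consistency of $a$. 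Hence $f(a)$ is $\mathbf{L}$-consistent.

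The only real subtlety — and the step I would flag as the main obstacle — is justifying that the $*$-map commutes with the Boolean connectives well enough for the consistency argument to go through, i.e. that $(\alpha_1 \land \cdots \land \alpha_n)^* = \alpha_1^* \land \cdots \land \alpha_n^*$ and that a propositional-tautology-based derivation lifts along $*$. This is immediate from the clauses for $(\neg\varphi)^*$ and $(\varphi \land \psi)^*$, but one must be careful that the nonstandard clause $(\Box\varphi)^* = \circ(\varphi^*) \land \varphi^*$ never interferes: it only ever introduces a $\mathcal{L}^\circ$-formula that sits inside $a$ as a single unit, and the consistency contradiction is driven entirely by the Boolean structure, not by how boxes are translated. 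Once this bookkeeping is in place, both halves follow, and we conclude $f(a) \in W_{\mathbf{L}}$, making $f$ well defined.
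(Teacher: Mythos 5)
Your proposal is correct and follows essentially the same argument as the paper: consistency by contradiction via the theoremhood-preserving $*$-map applied to a finite inconsistent conjunction (landing the $\alpha_i^*$ inside the consistent set $a$), and maximality via the identity $(\neg\alpha)^* = \neg(\alpha^*)$ transferring maximality of $a$ to $f(a)$. The extra bookkeeping you flag about the $\Box$-clause not interfering with the Boolean structure is sound but is exactly what the paper's proof implicitly relies on as well.
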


\begin{proof}
For what concerns consistency, assume not. Then there are formulas $\alpha_1, \ldots, \alpha_n \in f(a)$
such that $$\mathbf{L} \vdash (\alpha_1 \land \ldots \land \alpha_n) \to \bot.$$  
As a consequence of $*$ preserving theoremhood we can infer that 
$$\mathbf{L}^\circ \vdash (\alpha_1^* \land \ldots \land \alpha_n^*) \to \bot$$  

\noindent
with $\alpha_1^*, \ldots,  \alpha_n^* \in a$. This contradicts the consistency of $a \in W_{\mathbf{L}^\circ}$.

For maximality, again assume not. Then there is a formula $\alpha \in Form_{\mathcal{L}^\Box}$ such that 
neither $\alpha$ nor $\neg \alpha$ is in $f(a)$. As a consequence, according to the definition of $f(a)$, 
neither $\alpha^*$ nor $\neg \alpha^*$ is in $a$. This contradicts the maximality of  $a \in W_{\mathbf{L}^\circ}$.
\end{proof}

Therefore, the definition of $f$ makes sense. We now show that $f$ is actually an injective bounded morphism. We proceed by means of the following claims. 

\begin{prop}
The function $f$ is injective.
\end{prop}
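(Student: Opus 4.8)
The plan is to prove injectivity by contraposition: I will take two distinct points $a, b \in W_{\mathbf{L}^\circ}$ and produce an $\mathcal{L}^\Box$-formula witnessing $f(a) \neq f(b)$. The whole argument rests on a feature of the setup that is easy to overlook, namely that the assignment $p \mapsto p^*$ was chosen to be a map from $Var$ \emph{onto} $Form_{\mathcal{L}^\circ}$. Because both sets are countably infinite such a surjection exists, and this surjectivity is precisely the ingredient that makes the proof go through.

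First I would observe that since $a$ and $b$ are distinct maximal $\mathbf{L}^\circ$-consistent sets, there is some $\mathcal{L}^\circ$-formula $\gamma$ with (without loss of generality) $\gamma \in a$ and $\gamma \notin b$. Next, invoking the surjectivity of $p \mapsto p^*$, I would select a propositional variable $p \in Var$ such that $p^* = \gamma$. The key point is that, by the base clause of the $*$-map, the value of the $*$-map on the single variable $p$ (viewed as an $\mathcal{L}^\Box$-formula) is exactly $p^* = \gamma$, so no translation machinery intervenes at this step.

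Finally I would read off the conclusion directly from the definition of $f$. Since $p^* = \gamma \in a$, the definition $f(a) = \{\alpha : \alpha^* \in a\}$ gives $p \in f(a)$; and since $p^* = \gamma \notin b$, the same definition gives $p \notin f(b)$. Hence $f(a) \neq f(b)$, which establishes injectivity.

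I do not expect a serious obstacle here: the argument is short once the right witness is identified. The only genuine subtlety — and the step I would flag as the crux — is recognizing that surjectivity of the variable assignment is indispensable. Without it one could not guarantee a variable $p$ mapping to the distinguishing formula $\gamma$, and the image of an arbitrary $\mathcal{L}^\circ$-formula under $*$ need not even be in the range of the $*$-map; it is exactly the fact that $\gamma$ is realized as $p^*$ for a \emph{variable} $p$ that lets me treat $p$ as a legitimate $\mathcal{L}^\Box$-formula in $f(a)$.
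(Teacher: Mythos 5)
Your proof is correct and follows essentially the same route as the paper's: both arguments hinge on the surjectivity of the variable assignment $p \mapsto p^*$ to obtain a variable $p$ with $p^* = \gamma$ for a formula $\gamma$ separating $a$ from $b$. The only (harmless) difference is cosmetic: the paper uses maximality of $b$ to place $\neg p$ in $f(b)$, whereas you read $p \notin f(b)$ directly off the definition of $f$, which is equally valid.
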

\begin{proof}
Assume $a \neq b$. We must show $f(a) \neq f(b)$. Without loss of generality, we may assume that there is a formula $\theta \in a \setminus b$. 
Since $\theta$ belongs to $Form_{\mathcal{L}^\circ}$, it is equal to some $p^*$, for $p \in Var$. Therefore, by maximality of $b$, we have that 
$\neg \theta \in b$. Now, since $\theta = p^*$ we also have that $\neg \theta = \neg (p^*) = (\neg p)^*$. As a consequence, $p \in f(a)$
and $\neg p \in f(b)$, thus showing that $f(a) \neq f(b)$. 
\end{proof}

\begin{prop}
If $a R_{\mathbf{L}^\circ} b$ then $f(a) R_{\mathbf{L}} f(b)$.
\end{prop}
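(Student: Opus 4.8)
The plan is to verify the (forth) condition directly by chasing membership through the $*$-map. Recall that $F_{\mathbf{L}}$ is the canonical frame for $\mathbf{L}$ as usually defined, so $f(a) R_{\mathbf{L}} f(b)$ holds precisely when $\{\varphi \in Form_{\mathcal{L}^\Box} : \Box \varphi \in f(a)\} \subseteq f(b)$, whereas the hypothesis $a R_{\mathbf{L}^\circ} b$ unwinds, by the construction of the canonical relation in Section~\ref{sec: Completeness}, to $\lambda(a) \subseteq b$, where $\lambda(a) = \{\psi \in Form_{\mathcal{L}^\circ} : (\psi \land \circ \psi) \in a\}$. The whole argument rests on the fact that the clause $(\Box \varphi)^* = \circ(\varphi^*) \land \varphi^*$ of the $*$-map mirrors exactly the defining condition of $\lambda$.

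Concretely, I would fix an arbitrary $\varphi \in Form_{\mathcal{L}^\Box}$ with $\Box \varphi \in f(a)$ and show $\varphi \in f(b)$. By the definition of $f$, the assumption $\Box \varphi \in f(a)$ means $(\Box \varphi)^* \in a$, i.e.\ $\circ(\varphi^*) \land \varphi^* \in a$. Since $a$ is a maximal $\mathbf{L}^\circ$-consistent set, this is equivalent to $(\varphi^* \land \circ(\varphi^*)) \in a$, which is exactly the statement that $\varphi^* \in \lambda(a)$.

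At this point I would invoke the hypothesis $a R_{\mathbf{L}^\circ} b$, that is $\lambda(a) \subseteq b$, to conclude $\varphi^* \in b$. But $\varphi^* \in b$ is, once more by the definition of $f$, precisely the assertion $\varphi \in f(b)$. As $\varphi$ was arbitrary, this establishes $\{\varphi : \Box \varphi \in f(a)\} \subseteq f(b)$, hence $f(a) R_{\mathbf{L}} f(b)$.

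There is no serious obstacle here: the entire proof is a transfer of membership facts across the $*$-map, and the only point requiring care is confirming that the translation of $\Box \varphi$ coincides with the condition defining $\lambda$, so that the inclusion $\lambda(a) \subseteq b$ can be applied. I note that the converse, (back), direction is the genuinely substantive part of showing that $f$ is a bounded morphism, but it is not required for the present statement.
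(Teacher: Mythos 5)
Your proof is correct and follows essentially the same route as the paper's: unwind $\Box\varphi \in f(a)$ via the $*$-map to $\circ(\varphi^*)\land\varphi^* \in a$, recognize this as $\varphi^* \in \lambda(a)$, and apply the hypothesis $\lambda(a)\subseteq b$ to land in $f(b)$. If anything, you are slightly more careful than the paper in noting that the order of the conjuncts (handled by maximality of $a$) is what licenses the identification with the defining condition of $\lambda$.
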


\begin{proof}
The claim consists in showing that $\Box^-\big{(}f(a)\big{)}\subseteq f(b)$. 
To this aim, assume $\alpha \in \Box^-\big{(}f(a)\big{)}$, and so  $\Box \alpha \in f(a)$. 
Then, by definition of $f(a)$, $(\Box\alpha)^* \in a$. By definition of the $*$-translation, 
we conclude that $\circ (\alpha^*) \land \alpha^* \in a$. This means, in particular, 
that $\alpha^* \in \lambda(a)$, and so  $\alpha^* \in b$, by our hypothesis. 
Hence, $\alpha \in f(b)$, by definition of $f(b)$. 
\end{proof}

\begin{prop}
If $f(a) R_{\mathbf{L}} c$ ,then there is a $b \in W_{\mathbf{L}^\circ}$ such that 
$a R_{\mathbf{L}^\circ} b$ and $f(b) = c$. 
\end{prop}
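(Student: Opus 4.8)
The plan is to verify the (back) clause, which is the existence lemma underpinning the bounded morphism. First I would reduce the target. Since $f(b)$ is always a maximal $\mathbf{L}$-consistent set (by the preceding proposition) and $c$ is maximal consistent, the equation $f(b) = c$ follows from the single inclusion $c \subseteq f(b)$; and by the definition of $f$ this inclusion says exactly that $\alpha^* \in b$ for every $\alpha \in c$. The other demand, $a R_{\mathbf{L}^\circ} b$, unwinds to $\lambda(a) \subseteq b$. So I would introduce the candidate set
$$\Gamma := \lambda(a) \cup \{\alpha^* : \alpha \in c\}$$
and note that any maximal $\mathbf{L}^\circ$-consistent extension $b$ of $\Gamma$, furnished by the Lindenbaum lemma, automatically satisfies $\lambda(a) \subseteq b$ and $c \subseteq f(b)$, i.e.\ $a R_{\mathbf{L}^\circ} b$ and $f(b) = c$. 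Thus the whole statement reduces to the consistency of $\Gamma$.

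Proving that $\Gamma$ is $\mathbf{L}^\circ$-consistent is the heart of the matter, and the step I expect to be the main obstacle. Suppose it were not. Using that $\lambda(a)$ is closed under conjunction (Lemma, clause 2), that $c$ is closed under conjunction, and that $*$ commutes with $\land$, I can collapse any inconsistent finite subset into a single pair $\varphi \in \lambda(a)$ and $\alpha \in c$ with
$$\mathbf{L}^\circ \vdash (\varphi \land \alpha^*) \to \bot,$$
which, since $(\neg \alpha)^* = \neg(\alpha^*)$, is the same as $\mathbf{L}^\circ \vdash \varphi \to (\neg \alpha)^*$. (The degenerate cases of an empty conjunction on either side are absorbed by taking $\varphi = \top \in \lambda(a)$ or $\alpha = \top \in c$.)

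The decisive move is to feed this implication into the rule $bN$, which turns $\vdash \varphi \to (\neg\alpha)^*$ into
$$\mathbf{L}^\circ \vdash (\circ \varphi \land \varphi) \to (\circ((\neg\alpha)^*) \land (\neg\alpha)^*).$$
The consequent is nothing but $(\Box \neg \alpha)^*$, by the translation clause $(\Box \beta)^* = \circ(\beta^*) \land \beta^*$ with $\beta = \neg \alpha$ -- and recognizing this identity is precisely the crux, as it is what transports a consistency failure on the $\mathcal{L}^\circ$ side back into a $\Box$-fact about $f(a)$. Now $\varphi \in \lambda(a)$ gives $\circ \varphi \land \varphi \in a$, so maximality of $a$ yields $(\Box \neg \alpha)^* \in a$, hence $\Box \neg \alpha \in f(a)$ by the definition of $f$. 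Finally, $f(a) R_{\mathbf{L}} c$ means $\Box^-(f(a)) \subseteq c$ in the canonical frame of $\mathbf{L}$, so $\neg \alpha \in c$, contradicting $\alpha \in c$. Hence $\Gamma$ is consistent, and its Lindenbaum extension is the world $b$ we sought; the remaining bookkeeping with maximal consistent sets is routine.
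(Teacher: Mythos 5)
Your proof is correct and takes essentially the same route as the paper's: your candidate set $\Gamma$ coincides with the paper's $b_0$ (since the $*$-map is onto $Form_{\mathcal{L}^\circ}$, the paper's first component $\{\alpha^* : \alpha^* \land \circ(\alpha^*) \in a\}$ is exactly $\lambda(a)$), and both arguments establish consistency by deriving $(\Box \neg \alpha)^* \in a$, hence $\Box \neg \alpha \in f(a)$, hence $\neg \alpha \in c$, a contradiction, before closing with a Lindenbaum extension. The only difference is cosmetic: you feed the implication into the rule $bN$ directly, whereas the paper reaches the same formula via the theorem $\varphi \to (\circ(\varphi \to \psi) \to (\circ \varphi \to \circ \psi))$ together with the derived necessitation rule; your version is, if anything, slightly tidier, and your explicit reduction of an arbitrary finite inconsistent subset to a single pair is a detail the paper glosses over.
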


\begin{proof}
Define the following set:
$$
b_0 = \{\alpha^* : \alpha^* \land \circ(\alpha^*) \in a \} \cup \{\beta^* : \beta \in c\}.
$$
We claim that $b_0$ is consistent. Assume not, and let $\alpha^*, \beta^* \in Form_{\mathcal{L}^{\circ}} \cap b_0$
such that 
$$\mathbf{L}^{\circ} \vdash \alpha^* \land \beta^* \to \bot.$$
Hence, we have the following deductions.
\begin{enumerate}
\item $\mathbf{L}^{\circ} \vdash \alpha^* \to \neg \beta^*$
\item $\mathbf{L}^{\circ} \vdash \alpha^* \to \big{(} \circ(\alpha^* \to \neg \beta^*) \to (\circ \alpha^* \to \circ \neg \beta^*)  \big{)}$
\item $\mathbf{L}^{\circ} \vdash \circ(\alpha^* \to \neg \beta^*)$
\end{enumerate}

\noindent
where (2) is an instance of a theorem of $\mathbf{L}^{\circ}$, as pointed out in Fact \ref{BKThms}. 
Moreover, since $\alpha^* \in a$ we can show that $\circ \neg \beta^* \in a$. 
As a consequence, $\neg \beta^* \land \circ \neg \beta^* \in a$. Thus $\big{(} \Box(\neg \beta)\big{)} ^* \in a$,
and so $\Box \neg \beta \in f(a)$. By our hypothesis we then have that $\neg \beta \in c$, thus contradicting the consistency of $c$. 

Now extend $b_0$ to a maximal set and name it $b$. We have to show that $\lambda(a) \subseteq b$ and that $f(b) = c$. 

So assume $\alpha \in \lambda(a)$. Since $\alpha$ is a formula in $\mathcal{L}^\circ$, we know that there is a 
$p \in Var$ such that $p^* = \alpha$. Hence $p^* \land \circ (p^*) \in a$ and so, by construction, $p^* \in b_0 \subseteq b$.
Thus $\alpha \in b$. 

In order to show that $f(b) = c$ it is sufficient to notice that, by construction, $c \subseteq f(b)$. 
And so the equality holds by the maximality of $c$.

\end{proof}

At this stage, we have shown that $f$ is indeed an injective bounded morphism from the canonical frame of $\mathbf{L}^\circ$ to that of $\mathbf{L}$. Furthermore, the image of $f$ is a generated subframe of the canonical frame of $\mathbf{L}$, and is isomorphic to the canonical frame of $\mathbf{L}^\circ$.\footnote{Clearly, since we are considering the image of $f$, we obtain a bijection between the canonical frame of $\mathbf{L}^\circ$ and a subframe of the canonical frame for $\mathbf{L}$. The fact that this bijection is in fact an isomorphism follows from two applications of Theorem \ref{ValidityPreservation}: in the one direction we consider $f$, and in the other $f^{-1}$, both of which are surjective bounded morphisms. Lastly, the fact that the subframe is a generated subframe of $F_{\mathbf{L}}$ is a consequence of the (back) condition placed on $f$.} Symbolically, we have 

$$F_{\mathbf{L}^\circ} \cong F_{sub} \rightarrowtail F_{\mathbf{L}}$$

\noindent
(where $F_{sub}$ is the subframe of $F_{\mathbf{L}}$). Therefore,  $F_{\mathbf{L}^\circ}$ is actually an $\mathbf{L}$-frame, from Theorem \ref{ValidityPreservation}.

Finally, assume that some formula $\alpha$ is not a theorem of $\mathbf{L}^\circ$. Then, clearly, it is not valid on the canonical frame $F_{\mathbf{L}^\circ}$. In turn, we then know that there is a generated subframe of $F_{\mathbf{L}}$, call it $F_{sub}$, on which $\alpha$ is not valid (since $F_{\mathbf{L}^\circ} \cong F_{sub}$). This then implies that $\alpha$ is not valid on  $F_{\mathbf{L}}$ (because $F_{sub} \rightarrowtail F_{\mathbf{L}}$ and so $F_{\mathbf{L}} \models \alpha$ implies $F_{sub} \models \alpha$). Therefore, on the assumption that $\mathbf{L}$ is canonical, we have that $\mathbf{L}^\circ$ is complete with respect to classes of frames containing the canonical frame of $\mathbf{L}$, as desired.

This completes the proof of Theorem \ref{completeness}.

\begin{coro}
Let $\mathbf{L}$ be a normal modal logic that is canonical. Furthermore, let its canonical frame be contained in the class $\mathbb{C}_\mathbf{L}$. Then $\mathbf{L}^\circ$ is complete with respect to $\mathbb{C}_\mathbf{L}^m$.
\end{coro}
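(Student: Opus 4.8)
The plan is to obtain the corollary directly from Theorem \ref{completeness} by exploiting the inclusion $\mathbb{C}_\mathbf{L} \subseteq \mathbb{C}_\mathbf{L}^m$; the mirror-reduction corollary of Section \ref{sec: Lang&Sem} then shows that nothing is lost in passing to the larger class.

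First I would observe that $\mathbb{C}_\mathbf{L} \subseteq \mathbb{C}_\mathbf{L}^m$. This holds because every frame $F$ is a mirror reduction of itself: taking $R^m = R$ gives $R \setminus R^m = \emptyset \subseteq \{\langle x,x\rangle : x \in W\}$, so $F \sim_m F$, and hence $F \in \mathbb{C}_\mathbf{L}^m$ whenever $F \in \mathbb{C}_\mathbf{L}$. From this inclusion the completeness transfer is immediate, and in the convenient direction. If an $\mathcal{L}^\circ$-formula $\alpha$ is valid over every frame in $\mathbb{C}_\mathbf{L}^m$, then in particular it is valid over every frame in the subclass $\mathbb{C}_\mathbf{L}$; by Theorem \ref{completeness}, $\mathbf{L}^\circ$ is complete with respect to $\mathbb{C}_\mathbf{L}$, so $\alpha$ is a theorem of $\mathbf{L}^\circ$. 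Since $\alpha$ was arbitrary, $\mathbf{L}^\circ$ is complete with respect to $\mathbb{C}_\mathbf{L}^m$.

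It is worth recording that this is not a genuine weakening of the characterization. By the mirror-reduction corollary, any $F \in \mathbb{C}_\mathbf{L}^m$ is mirror-related to some $F' \in \mathbb{C}_\mathbf{L}$ and therefore validates exactly the same $\mathcal{L}^\circ$-formulas as $F'$; consequently the set of $\mathcal{L}^\circ$-formulas valid over $\mathbb{C}_\mathbf{L}^m$ in fact coincides with the set valid over $\mathbb{C}_\mathbf{L}$. This coincidence is the feature that makes the passage to $\mathbb{C}_\mathbf{L}^m$ meaningful, and it is the part of the argument that rests on the earlier machinery rather than on bare set inclusion.

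There is no serious obstacle here: the substance is entirely carried by the inclusion of frame classes together with Theorem \ref{completeness}, and the mirror-reduction corollary supplies the supplementary observation that validity over the two classes is identical. The only point requiring attention is to keep straight the direction in which completeness interacts with class inclusion — namely, that completeness with respect to a larger class of frames follows from, rather than implies, completeness with respect to a subclass — so that the containment $\mathbb{C}_\mathbf{L} \subseteq \mathbb{C}_\mathbf{L}^m$ is precisely what is needed.
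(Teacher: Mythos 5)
Your proof is correct. The paper states this corollary without proof, and its intended justification (given the surrounding emphasis on mirror reduction and $m$-characterization) is the invariance result of Section \ref{sec: Lang&Sem}: every frame in $\mathbb{C}_\mathbf{L}^m$ is mirror-related to a frame in $\mathbb{C}_\mathbf{L}$, so the two classes validate exactly the same $\mathcal{L}^\circ$-formulas, and completeness transfers from Theorem \ref{completeness}. Your primary argument is even more elementary: you only use the inclusion $\mathbb{C}_\mathbf{L} \subseteq \mathbb{C}_\mathbf{L}^m$ (every frame being a mirror reduction of itself) together with the correct observation that completeness is monotone under enlarging the frame class, since validity over the larger class implies validity over the subclass. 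That alone establishes the corollary, and you get the direction of transfer right, which is the one place such an argument could go wrong. Your supplementary remark then supplies exactly the paper's intended argument, and it is the more important one in context: the bare-inclusion route makes the corollary a formally weaker statement than the theorem, whereas the coincidence of valid formulas over $\mathbb{C}_\mathbf{L}$ and $\mathbb{C}_\mathbf{L}^m$ is what gives the move to $\mathbb{C}_\mathbf{L}^m$ its content, namely that it can later be paired with soundness with respect to $\mathbb{C}_\mathbf{L}^m$ (Theorem \ref{SufficientSoundness}) to yield $m$-characterization. So you have, in effect, given both the trivial proof and the paper's proof, and correctly identified which ingredient does the real work.
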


\section{Soundness}\label{sec:Soundness}

In this section we will give a sufficient semantic condition for a logic $\mathbf{L}^\circ$ to be sound with respect to $\mathbb{C}_\mathbf{L}$. Though we do not obtain a single soundness theorem that is as general as our completeness theorem, we do obtain a result that covers a wide variety of normal modal logics and their $\circ$-translations. In conjunction with Theorem \ref{completeness}, this then provides an $m$-characterization theorem for those logics satisfying the condition.

Before starting, we need a lemma that connects the truth of $\mathcal{L}^\Box$-formulas  and $\mathcal{L}^\circ$-formulas.

\begin{lemma}\label{bridge}
Let $M = \langle F, V \rangle$ be a model based on $F =\langle W, R \rangle$ and let
$\alpha$ be a formula of the language $\mathcal{L}^{\Box}$. Then, for every $x \in W$,
the following holds:

$$
M, x \models_{ri} \alpha^\circ \iff M^r, x \models \alpha
$$

\noindent
where $M^r$ stands for the model $\langle F^r, V \rangle$, based on the frame $F^r = \langle W, R^r \rangle$, given by $R^r = R \cup \{(x, x) : x \in W\}$. 

\end{lemma}

\begin{proof}
We prove the lemma by induction on the complexity of $\alpha$. 
If $\alpha = p \in Var$, then the result is immediate, since the valuations in the two models are identical. 

If $\alpha = \beta \land \gamma$, then an easy application of the inductive hypothesis 
shows that the conclusion of the lemma holds.

If $\alpha = \neg \beta$, then $M^r, x \models \neg \beta$ iff $M^r, x \not\models \beta$ iff 
$M, x \not\models_{ri} \beta^\circ$ iff $M, x \models_{ri} \neg \beta^\circ$.

Finally, if $\alpha = \Box \beta$, then $M^r, x \models \Box \beta$ implies
that for all $y \in W$ s.t.\ $xR^ry$, $M^r, y \models \beta$. 
By the inductive hypothesis, this is equivalent to saying that for all $y \in W$, $xR^ry$ implies $M, y \models_{ri} \beta^\circ$. From this we obtain that for all $y \in W$ s.t.\ $xRy$, $M, y \models_{ri} \beta^\circ$ and that $M, x \models_{ri} \beta^\circ$. Thus, we have that  $M, x \models_{ri} \circ \beta^\circ \land \beta^\circ$, and so $M, x \models_{ri} (\Box \beta)^\circ$. 

In the other direction, if  $M, x \models_{ri} (\Box \beta)^\circ$ then $M, x \models_{ri} \circ \beta^\circ \land \beta^\circ$. We then have that $M, x \models_{ri} \beta^\circ$ and so $M^r, x \models \beta$. In addition, from $M, x \models_{ri} \circ \beta^\circ$ we have that for all $y$ s.t.\ $xRy$,  $M, y \models_{ri} \beta^\circ$ (since the other possibility, that $M, x \not\models_{ri} \beta^\circ$, has been ruled out). Then, from the induction hypothesis we get that for all $y$ s.t.\ $xRy$,  $M^r, y \models \beta$. But, since $M, x \models_{ri} \beta$, we have that for all $y$ s.t.\ $xR^ry$,  $M^r, y \models \beta$, and so  $M^r, y \models \Box \beta$.

\end{proof}

\begin{defi}
We will say that a class of frames $\mathbb{C}$ is robust with respect to reflexivity when the following condition holds:

\begin{center}
If $F \in \mathbb{C}$, and $F^r$ is the result of adding all reflexive arrows to $F$, then $F^r \in \mathbb{C}$.
\end{center}

\end{defi}

In other words, $\mathbb{C}$ is robust with respect to reflexivity when the reflexive closure of each frame in $\mathbb{C}$  is also in $\mathbb{C}$ .

Notice that this is obviously not equivalent to saying that $F$ and $F^r$ are mirror related. First, $F^r$ is obtained, specifically, by adding arrows. In addition, $F^r$ is completely reflexive.

\begin{teo}\label{SufficientSoundness}
Let $\mathbf{L}$ be a normal modal logic that is sound with respect to a class of frames $\mathbb{C}_\mathbf{L}$ that is robust with respect to reflexivity. Then $\mathbf{L}^\circ$ is sound with respect to $\mathbb{C}_\mathbf{L}$. In fact, $\mathbf{L}^\circ$ is sound with respect to $\mathbb{C}_\mathbf{L}^m$.
\end{teo}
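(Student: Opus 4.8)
The plan is to prove soundness by induction on the structure of derivations in $\mathbf{L}^\circ$, showing that every theorem of $\mathbf{L}^\circ$ is $\models_{ri}$-valid on every frame in $\mathbb{C}_\mathbf{L}$. Recall that $\mathbf{L}^\circ$ is generated, as an $RI$-logic, by the propositional tautologies, the schemata $b0$, $b1$, $b2$, and the translated axioms $\varphi^\circ$ for $\varphi \in \mathbf{L}$, using the rules Modus Ponens, $bN$, and Uniform Substitution. It therefore suffices to check that each generator is valid on $\mathbb{C}_\mathbf{L}$ and that each rule preserves validity on $\mathbb{C}_\mathbf{L}$.

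The heart of the argument, and the step I expect to be the main obstacle, is the case of the translated axioms $\varphi^\circ$. Here I would fix a frame $F \in \mathbb{C}_\mathbf{L}$, an arbitrary model $M$ based on $F$, and a world $x$, and show $M, x \models_{ri} \varphi^\circ$. By Lemma \ref{bridge}, this is equivalent to $M^r, x \models \varphi$, where $M^r$ is the reflexive closure of $M$, based on $F^r$. Now robustness enters: since $F \in \mathbb{C}_\mathbf{L}$ and $\mathbb{C}_\mathbf{L}$ is robust with respect to reflexivity, we have $F^r \in \mathbb{C}_\mathbf{L}$. As $\varphi \in \mathbf{L}$ and $\mathbf{L}$ is sound with respect to $\mathbb{C}_\mathbf{L}$, $\varphi$ is valid on $F^r$, so in particular $M^r, x \models \varphi$. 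Reading Lemma \ref{bridge} back gives $M, x \models_{ri} \varphi^\circ$, and since $M$ and $x$ were arbitrary, $F \models_{ri} \varphi^\circ$.

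The remaining generators and the rules are routine and are already handled by the proof of Theorem \ref{BKSoundness}: the propositional tautologies and $b0$, $b1$, $b2$ are valid on the class of all frames, hence on $\mathbb{C}_\mathbf{L}$, and Modus Ponens, $bN$, and Uniform Substitution preserve $\models_{ri}$-validity on any frame (for $bN$, one checks directly that whenever $\varphi \to \psi$ holds everywhere in a model, $(\circ \varphi \land \varphi) \to (\circ \psi \land \psi)$ does too). This completes the induction and yields soundness with respect to $\mathbb{C}_\mathbf{L}$.

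Finally, to upgrade to $\mathbb{C}_\mathbf{L}^m$, I would invoke the mirror-reduction corollary: if $F \in \mathbb{C}_\mathbf{L}^m$ then $F \sim_m F'$ for some $F' \in \mathbb{C}_\mathbf{L}$, and for every $\mathcal{L}^\circ$-formula $\alpha$ we have $F \models_{ri} \alpha$ iff $F' \models_{ri} \alpha$. Since every theorem of $\mathbf{L}^\circ$ is valid on $F'$ by the first part, it is valid on $F$ as well, giving soundness with respect to $\mathbb{C}_\mathbf{L}^m$. The only genuinely new ingredient is the interplay of Lemma \ref{bridge} with robustness in the $\varphi^\circ$ case; everything else is bookkeeping or a direct appeal to results already established.
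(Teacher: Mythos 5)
Your proposal is correct and follows essentially the same route as the paper: the key step in both is to apply Lemma \ref{bridge} to convert $\models_{ri}$-validity of $\varphi^\circ$ on $F$ into validity of $\varphi$ on the reflexive closure $F^r$, then use robustness to place $F^r$ in $\mathbb{C}_\mathbf{L}$ and invoke soundness of $\mathbf{L}$ (the paper phrases this contrapositively, as a proof by contradiction, but it is the same argument). Your explicit handling of the $RI$-axioms, the rules, and the mirror-reduction step for $\mathbb{C}_\mathbf{L}^m$ merely spells out details the paper compresses into one sentence.
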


\begin{proof}
We will show that for every theorem $\varphi$ of $\mathbf{L}$, $\varphi^\circ$ is valid on $\mathbb{C}_\mathbf{L}$. Since the rules of $\mathbf{K}^\circ$ preserve validity, this will imply that every theorem of $\mathbf{L}^\circ$ is valid on $\mathbb{C}_\mathbf{L}$. Assume, for a contradiction, that this is not the case. Thus, there exists a frame $F \in \mathbb{C}_\mathbf{L}$, such that $F \not\models \varphi^\circ$. 

Thus, there is a model $M$, based on $F$, and a state $x$, such that $M, x \not\models_{ri} \varphi^\circ$. From lemma \ref{bridge}, we then have that $M^r, x \not\models \varphi$. Therefore, $F^r \not\models \varphi$. But if $\mathbb{C}_\mathbf{L}$ is robust to reflexivity, it would have to be that $F^r \in \mathbb{C}_\mathbf{L}$, and so $F^r \models \varphi$, a contradiction.

Therefore, $\mathbf{L}^\circ$ is sound with respect to $\mathbb{C}_\mathbf{L}$, and also with respect to $\mathbb{C}_\mathbf{L}^m$.

\end{proof}

Though this result lacks the generality present in the completeness theorem, there are still some immediate corollaries.

\begin{coro}
Let $\mathbf{L}$ be any normal modal logic extending $\mathbf{T}$, and let $\mathbb{C}_\mathbf{L}$ be the class of all $\mathbf{L}$-frames. Then $\mathbf{L}^\circ$ is sound with respect to $\mathbb{C}_\mathbf{L}$.
\end{coro}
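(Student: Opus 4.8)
The plan is to derive this corollary directly from Theorem \ref{SufficientSoundness}. That theorem tells us that $\mathbf{L}^\circ$ is sound with respect to $\mathbb{C}_\mathbf{L}$ whenever $\mathbf{L}$ is sound with respect to $\mathbb{C}_\mathbf{L}$ and the class $\mathbb{C}_\mathbf{L}$ is robust with respect to reflexivity. Since $\mathbb{C}_\mathbf{L}$ is taken to be \emph{the class of all $\mathbf{L}$-frames}, soundness of $\mathbf{L}$ with respect to $\mathbb{C}_\mathbf{L}$ holds by definition. Hence the entire burden of the proof reduces to a single point: verifying that the class of all $\mathbf{L}$-frames is robust with respect to reflexivity.

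To check robustness, I would exploit the hypothesis that $\mathbf{L}$ extends $\mathbf{T}$. The crucial observation is that every $\mathbf{L}$-frame is already reflexive. This follows from the standard frame-correspondence fact that a frame validates the $\mathbf{T}$-axiom $\Box \varphi \to \varphi$ if and only if its accessibility relation is reflexive. Since $\mathbf{L}$ contains $\mathbf{T}$, the $\mathbf{T}$-axiom is among its theorems, so every frame in $\mathbb{C}_\mathbf{L}$ must validate it and is therefore reflexive.

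With this in hand, robustness is immediate. Take any $F \in \mathbb{C}_\mathbf{L}$. Because $F$ is already reflexive, the operation of adding all reflexive arrows does nothing: we have $F^r = F$, and so trivially $F^r \in \mathbb{C}_\mathbf{L}$. This establishes that $\mathbb{C}_\mathbf{L}$ is robust with respect to reflexivity, and Theorem \ref{SufficientSoundness} then yields that $\mathbf{L}^\circ$ is sound with respect to $\mathbb{C}_\mathbf{L}$ (indeed, with respect to $\mathbb{C}_\mathbf{L}^m$ as well).

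There is no genuine obstacle here; the only thing to get right is the recognition that, under the $\mathbf{T}$-extension hypothesis, the reflexive closure $F^r$ coincides with $F$, so robustness holds for the trivial reason that the closure operation is the identity on $\mathbb{C}_\mathbf{L}$. The argument is thus essentially a one-line application of the soundness theorem once the reflexivity of $\mathbf{L}$-frames is noted.
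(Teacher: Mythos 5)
Your proof is correct and follows essentially the same route as the paper's: both note that every $\mathbf{L}$-frame is reflexive (since $\mathbf{L}$ extends $\mathbf{T}$), so the reflexive closure operation is the identity on $\mathbb{C}_\mathbf{L}$, making robustness with respect to reflexivity trivial, and then apply Theorem \ref{SufficientSoundness}. Your version merely spells out the frame-correspondence justification and the by-definition soundness of $\mathbf{L}$ that the paper leaves implicit.
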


\begin{proof}
If $\mathbf{L}$ extends $\mathbf{T}$, then any $\mathbf{L}$ frame is reflexive. Therefore, $\mathbf{C}_\mathbf{L}$ is obviously robust with respect to reflexivity. Thus, $\mathbf{L}^\circ$ is sound with respect to $\mathbb{C}_\mathbf{L}$.
\end{proof}

We can also apply this theorem in order to obtain more specific results. The next corollary lists just some examples of this, and is in no way comprehensive.

\begin{coro}
The following soundness results hold:

\begin{enumerate}
\item $\mathbf{D}^\circ$ is sound with respect to the class of all serial frames;
\item $\mathbf{K4}^\circ$ is sound with respect to the class of all transitive frames;
\item $\mathbf{KB}^\circ$ (where $\mathbf{KB} = \mathbf{K}+(\varphi \to \Box \Diamond \varphi)$) is sound with respect to the class of all symmetric frames;
\item $\mathbf{KM}^\circ$ (where $\mathbf{KM} = \mathbf{K}+(\Box\Diamond\varphi \to \Diamond \Box \varphi)$) is sound with respect to the class of all final frames.\footnote{However, note that in this case we do not have a completeness result because $\mathbf{KM}^\circ$ is not canonical \cite{Goldblatt}.}
\end{enumerate}
\end{coro}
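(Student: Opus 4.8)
The plan is to derive every item as a direct application of Theorem \ref{SufficientSoundness}. For each logic $\mathbf{L}$ among $\mathbf{D}$, $\mathbf{K4}$, $\mathbf{KB}$, and $\mathbf{KM}$, it is a standard fact that $\mathbf{L}$ is sound with respect to the class $\mathbb{C}_\mathbf{L}$ named in the corollary (serial, transitive, symmetric, and final frames, respectively). By the theorem, the only thing left to check in each case is that $\mathbb{C}_\mathbf{L}$ is robust with respect to reflexivity: that adding all reflexive arrows to a frame $F \in \mathbb{C}_\mathbf{L}$ yields a frame $F^r$ still lying in $\mathbb{C}_\mathbf{L}$. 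Once robustness is verified, soundness with respect to $\mathbb{C}_\mathbf{L}$ (and hence $\mathbb{C}_\mathbf{L}^m$) follows immediately.

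First I would dispatch the three easy cases. For seriality, note that $F^r$ is reflexive, hence trivially serial, so the class of serial frames is robust. For transitivity, I would verify that $R^r = R \cup \{(x,x) : x \in W\}$ remains transitive whenever $R$ is, by a short case analysis on which of the two composed arrows lie on the diagonal (in each case the composite either follows from transitivity of $R$ or reduces to one of the two given arrows). For symmetry, the added diagonal pairs are symmetric by themselves, while the genuine arrows of $F$ keep their converses, so $F^r$ is symmetric. Each of these is a routine structural check.

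The main obstacle is item (4), concerning $\mathbf{KM}$ and the class of final frames. Here I would first pin down the frame condition: a frame is \emph{final} when every state $x$ sees some state $y$ whose successor set is contained in $\{y\}$ (a point that sees at most itself), and I would recall that $\mathbf{KM}$ is sound --- though not complete, since it is not canonical \cite{Goldblatt} --- with respect to this class, because at such a witness $y$ any $\Diamond$ collapses to truth-at-$y$, forcing $\Box\Diamond\varphi \to \Diamond\Box\varphi$ to hold at $x$. The delicate point is robustness: when passing to $F^r$, each witness $y$ with $\{z : yRz\} \subseteq \{y\}$ acquires at most the arrow $(y,y)$, so its successor set stays inside $\{y\}$ and $y$ remains a witness, while every $x$ that saw $y$ in $F$ still sees it in $F^r$. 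Hence $F^r$ is again final. I expect this to be the crux, precisely because the definition of ``final'' must be the ``sees at most itself'' reading rather than ``has no successors'': the latter condition is destroyed by reflexive closure and would break the application of Theorem \ref{SufficientSoundness}, which is also consistent with the footnote's observation that only soundness, and not completeness, is available in this case.
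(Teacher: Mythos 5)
Your proof is correct and takes essentially the same approach as the paper: the paper's own proof is a one-line observation that the classes of serial, transitive, symmetric, and final frames are all robust with respect to reflexivity, with Theorem \ref{SufficientSoundness} supplying the conclusion, and you have simply spelled out the routine checks. Your reading of ``final'' (every state sees a state that is related only to itself) agrees with the paper's parenthetical definition, so your robustness argument for item (4) is exactly the intended one.
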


\begin{proof}
The classes of serial, transitive, symmetric, and final (every state is related to at least one state that is related only to itself) frames are all robust with respect to reflexivity.
\end{proof}

Note, in addition, that soundness is going to be preserved when combining these logics, as usual. That is, for example, we have that $\mathbf{KB4}^\circ$ is sound with respect to the class of transitive symmetric frames. Thus, while the soundness result is less general than desired, in fact one can still use it to obtain soundness results for a surprisingly wide range of normal modal logics.

However, we have already encountered one system that sits outside the scope of the soundness theorem: $\mathbf{K5}^\circ$. Recall that $\mathbf{K5}$ is characterized by the class of euclidean frames. However, euclidean frames are not robust to reflexivity. To take a trivial example, one can consider the frame in which $W = \{x,y\}$ and $R = \{\langle x,y \rangle, \langle y, y, \rangle\}$. On this frame, the euclidean condition is vacuously satisfied. However, when one adds all reflexive arrows, we obtain the frame with the accessibility relation $R^r = \{\langle x,y\rangle, \langle x,x \rangle, \langle y, y\rangle \}$. This is no longer euclidean, as $xRy$ and $xRx$ ought to imply that $yRx$, but we lack this relationship. The point, therefore, is that our soundness theorem does not, on the basis of a soundness theorem for $\mathbf{K5}$, provide us with a theorem for the translated logic $\mathbf{K5}^\circ$. And, in fact, it is straightforward to construct a euclidean frame that does not validate $5^\circ$, the translation of $\Diamond \varphi \to \Box \Diamond \varphi$.

\section{Axiomatizing $RI$-Logics}\label{sec: Axiomatics}

Our results so far place conditions on when $\mathbf{L}^\circ$ will be sound and complete with respect to the class of frames $\mathbb{C}_\mathbf{L}$. So far, we have not explicitly mentioned the issue of axiomatizing these logics, a topic that was very central to both \cite{Marcos} and \cite{Steinsvold}. We can say something about this now.

As the following theorem demonstrates, in order to obtain an axiomatization of $\mathbf{L}^\circ$, one can simply take any adequate axiomatization of $\mathbf{L}$, and add the translations of these axioms to $\mathbf{K}^\circ$. This is, more or less, a consequence of the definition of $\mathbf{L}^\circ$. Moreover, as in the case of normal modal logics, the choice of axiomatization does not matter.

\begin{teo}
Let $\mathbf{L}$ be a normal modal logic that is axiomatized by adding some axiom $A$ to $\mathbf{K}$. Let $\mathbf{K}^\circ + A^\circ$ be the smallest $RI$-logic that contains all instances of $A^\circ$. Then $\mathbf{K}^\circ + A^\circ = \mathbf{L}^\circ$.
\end{teo}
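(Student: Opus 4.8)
The plan is to prove the identity by two inclusions, in each case exploiting the fact that the $\circ$-translation of Definition \ref{ballification} commutes with the propositional apparatus and, crucially, with uniform substitution. So the first step I would isolate is a substitution lemma: for any $\mathcal{L}^\Box$-substitution $\sigma$ sending each variable $p$ to a formula $\chi_p$, one has $(\sigma\varphi)^\circ = \tau(\varphi^\circ)$, where $\tau$ is the $\mathcal{L}^\circ$-substitution $p \mapsto (\chi_p)^\circ$. This is a routine induction on $\varphi$: the atomic and Boolean cases are immediate, and the modal case uses that $\tau$ commutes with $\circ$ and $\wedge$, so that $(\Box\sigma\psi)^\circ = \circ(\sigma\psi)^\circ \wedge (\sigma\psi)^\circ = \tau(\circ\psi^\circ \wedge \psi^\circ) = \tau((\Box\psi)^\circ)$. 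In particular, the translation of any instance of the schema $A$ is an instance of $A^\circ$, and conversely every instance of $A^\circ$ is a uniform-substitution instance of the single formula $A^\circ$ obtained by translating $A$ with its metavariables read as atoms.

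For the inclusion $\mathbf{K}^\circ + A^\circ \subseteq \mathbf{L}^\circ$ I would argue that $\mathbf{L}^\circ$ is itself an $RI$-logic containing every instance of $A^\circ$; minimality of $\mathbf{K}^\circ + A^\circ$ then gives the inclusion. Each instance of the schema $A$ lies in $\mathbf{L}$, so its translation lies in $\mathbf{L}^\circ$ by definition; by the substitution lemma the formula $A^\circ$ is such a translation, and since $\mathbf{L}^\circ$ is closed under uniform substitution it contains $\tau(A^\circ)$ for every $\mathcal{L}^\circ$-substitution $\tau$, i.e.\ every instance of $A^\circ$.

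For the reverse inclusion $\mathbf{L}^\circ \subseteq \mathbf{K}^\circ + A^\circ$ it suffices to show $\varphi^\circ \in \mathbf{K}^\circ + A^\circ$ for every $\varphi \in \mathbf{L}$, since $\mathbf{L}^\circ$ is the smallest $RI$-logic containing these translations and $\mathbf{K}^\circ + A^\circ$ is an $RI$-logic. I would run an induction on a derivation of $\varphi$ in $\mathbf{K}+A$. If $\varphi$ is a propositional tautology, then $\varphi^\circ$ is again a tautology and so belongs to $\mathbf{K}^\circ$; if $\varphi$ is an instance of the $K$-schema, then $\varphi^\circ \in \mathbf{K}^\circ = \mathbf{B_K}$ by the validity argument already carried out in the proof of Theorem \ref{BK=K}; if $\varphi$ is an instance of $A$, then $\varphi^\circ$ is an instance of $A^\circ$ by the substitution lemma and hence belongs to $\mathbf{K}^\circ + A^\circ$ by definition. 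For the rules: since $(\psi \to \varphi)^\circ = \psi^\circ \to \varphi^\circ$, the Modus Ponens step is just closure of $\mathbf{K}^\circ + A^\circ$ under Modus Ponens; the necessitation step follows because $(\Box\psi)^\circ = \circ\psi^\circ \wedge \psi^\circ$ and, by the derived rule of Proposition \ref{BKThms}, $\circ\psi^\circ$ is a theorem whenever $\psi^\circ$ is; and any uniform-substitution step is absorbed by the substitution lemma together with closure of $\mathbf{K}^\circ + A^\circ$ under uniform substitution.

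The conceptual content, and the only place requiring care, is the interplay between the $\circ$-translation and uniform substitution recorded in the substitution lemma: one must keep the two notions of \emph{instance} apart, since the translations of instances of the schema $A$ form only a proper subclass of the instances of the schema $A^\circ$, and it is precisely closure under uniform substitution in $RI$-logics that bridges the gap in both directions. Everything else is bookkeeping that reuses Theorem \ref{BK=K} for the tautology and $K$ cases and the derived necessitation-style rule of Proposition \ref{BKThms} for the necessitation case.
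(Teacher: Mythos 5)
Your proof is correct and takes essentially the same route as the paper's: the forward inclusion follows from minimality of $\mathbf{K}^\circ + A^\circ$, and the reverse inclusion proceeds by induction on derivations in $\mathbf{K}+A$, reusing the machinery of Theorem \ref{BK=K} for the tautology and $K$-schema cases and the derived rule of Proposition \ref{BKThms} for necessitation. Your explicit substitution lemma is a refinement rather than a departure: the paper merely asserts that ``the application of rules in $\mathbf{L}$ is honored by the translation,'' leaving implicit exactly the interaction between the $\circ$-translation and uniform substitution (and the resulting distinction between translations of instances of $A$ and instances of $A^\circ$) that you spell out.
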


\begin{proof}
Clearly, since $A \in \mathbf{L}$, $\mathbf{K}^\circ + A^\circ \subseteq \mathbf{L}^\circ$, from the definition of $\mathbf{L}^\circ$.

In the other direction assume that $\alpha \in \mathbf{L}^\circ$ but that $\alpha \not\in \mathbf{K}^\circ + A^\circ$.

There are two options regarding $\alpha$: either it is the translation of some $\beta$ that is a theorem of $\mathbf{L}$, or else it is a product of rule applications.

In the first case, since $\beta \in \mathbf{L}$, and $\mathbf{K}+A$ is assumed to be an adequate axiomatization of $\mathbf{L}$,  $\mathbf{K} \cup \{A\} \vdash \beta$. However, this would then imply that $\mathbf{K}^\circ \cup\{ A^\circ\} \vdash \beta^\circ$, since the application of rules in $\mathbf{L}$ is honored by the translation, as was demonstrated as part of the proof of Theorem \ref{BK=K}. This would then be a contradiction, as $\alpha$ is $\beta^\circ$.

In the second case, $\alpha$ is the result of the application of rules to some finite set of formulas $B = \{\beta_1,\dots,\beta_n\}$, where each $\beta_i$ ($1\leq i \leq n$) is either an instance of $b0$, $b1$, or $b2$, or the translation of some $\gamma \in \mathbf{L}$. However, as we have just demonstrated, it would have to be that for any such $\gamma$ we have that $\gamma ^\circ \in \mathbf{K}^\circ + A^\circ$. Since all instances of $b0$, $b1$, and $b2$ are also obviously in $ \mathbf{K}^\circ + A^\circ$, and $\mathbf{L}^\circ$ and $ \mathbf{K}^\circ + A^\circ$ are closed under the same rules, $\alpha$ must be in  $\mathbf{K}^\circ + A^\circ$, as desired.

\end{proof}

An immediate corollary of this result is that if a logic $\mathbf{L}$ is axiomatized by two different axiomatizations, then the translations of these axiomatizations, in the above sense, both provide axiomatizations of $\mathbf{L}^\circ$, as one would hope.

\section{Concluding Remarks}\label{sec: Conclusion}

We may describe the $\circ$-translation as  a functor between $\mathfrak{N}$, 
the collection of all normal modal logics, and $\mathfrak{N}^\circ$, the
collection of all non-normal modal logics in the language $\mathcal{L}^\circ$
that extend $\mathbf{K}^\circ$.

\begin{center}
\begin{tabular}{rll}
$F: \mathfrak{N}$ \text{ }& $\to$ \text{ }& $\mathfrak{N}^\circ$\\
$\mathbf{L}$ \text{ }& $\mapsto$ & $\mathbf{L}^\circ$ 
\end{tabular}
\end{center}

\noindent As the results of the previous sections show, the behavior of $F$ may be useful
in understanding the meta-theoretical properties of members of $\mathfrak{N}^\circ$.

We might then reformulate Theorem \ref{compT} by saying 
that $F(\mathbf{T}) = \mathbf{K}^\circ$ and that
the logic $\mathbf{T}^\circ$ is $m$-characterized by $\mathbb{C}_\mathbf{K}$. 
In the same way, Proposition 3.6 in \cite{Steinsvold}
can be expressed saying that $F(\mathbf{K4}) = F(\mathbf{S4})$, and that 
the logic $\mathbf{S4}^\circ$ is $m$-characterized by $\mathbb{C}_\mathbf{K4}$.  
Moreover, notice that since $\mathbf{S5} = \mathbf{K5} + T$ and $T^\circ$ is a tautology,
we have that  $F(\mathbf{K5}) = F(\mathbf{S5})$. However, 
the logic $\mathbf{S5}^\circ$ is not $m$-characterized by $\mathbb{C}_\mathbf{K5}$.
As a consequence, our method is not able to give a straightforward axiomatization of a 
logic in $\mathfrak{N}^\circ$ able to  be $m$-characterized by $\mathbb{C}_\mathbf{K5}$.

A possible development of this work---which may be of independent interest in the study of normal modal logics---is
the possibility of giving a syntactic characterization of the semantic notion of robustness with respect to reflexivity.  
Indeed we believe that,  at more general level, the topics and the results 
of this paper illustrate the potential usefulness of utilizing non-normal modal logics in the pursuit of a better understanding of normal ones. We hope that the content 
and the techniques of this paper will help foster the analysis of logics with different modal operators that are able to give new insights into the meta-theoretical study of normal modal logics.

In particular, one might undertake an extensive study of a $\star$-operator, 
whose definition is complementary with respect to that of the $\circ$-operator:

\medskip

\begin{tabular}{lll}
$M, w \models \star \varphi$ \text{ } & iff \text{ }& either $M, w \models \varphi$ or, for all $x \in W$, if $wRx$ then $M, x  \models  \varphi$ 
\end{tabular}

\medskip

\noindent 
A first step in the study of logics that may be called reflexive intolerant has  already been made in 
\cite{SteinsvoldW}, in the context of epistemic logic.\footnote{\cite{SteinsvoldW} performs a study of a slightly different operator, namely $W \varphi = \Box \varphi \land \neg \varphi$.}  We intend to study this further in future work.

\end{document}